\newtheorem{THM}{Theorem}
\newtheorem{Lemma}[THM]{Lemma}
\theoremstyle{remark}
\newcommand{\bR}{\mathbb{R}}
\newcommand{\bH}{\mathbb{H}}
\newcommand{\be}{\begin{equation}}
\newcommand{\ee}{\end{equation}}
\newcommand{\bS}{\mathbb{S}}
\newcommand{\mS}{\mathbb{S}}
\newcommand{\mC}{\mathbb{C}}
\newcommand{\ra}{\rightarrow}
\newcommand{\ip}[2]{\left\langle#1,#2\right\rangle}
\newcommand{\lp}{\left(}
\newcommand{\rp}{\right)}
\newcommand{\lb}{\left[}
\newcommand{\rb}{\right]}
\newcommand{\lc}{\left\{}
\newcommand{\rc}{\right\}}
\newcommand{\lab}{\left|}
\newcommand{\rab}{\right|}
\newcommand{\Lap}{\Delta}
\newcommand{\E}{\mathbb{E}}
\newcommand{\Prob}{\mathbb{P}}
\DeclareMathOperator{\area}{area}
\renewcommand{\d}[1]{\,d#1}
\newcommand{\Ind}[1]{\boldsymbol 1_{\{#1\}}}
\newcommand{\eps}{\varepsilon}
\newcommand{\SI}{\mathbb{S}_{\infty}}
\DeclareMathOperator{\supp}{supp}
\newcommand{\vn}{\eta}
\begin{document}

\title{Geometric and Martin Boundaries of a Cartan-Hadamard surface}
\author{Robert W.\ Neel}
\address{Department of Mathematics, Lehigh University, Bethlehem, PA, USA}
\email{robert.neel@lehigh.edu}
\begin{abstract}
We give a general criterion for the Dirichlet problem at infinity (DPI) on a Cartan-Hadamard surface to be solvable, which we primarily use to give the best possible upper radial radial curvature bound for solvability of the DPI, but which is also flexible enough to accommodate flats. In particular, any (upper) radial curvature bound which implies transience also implies solvability of the DPI, which is perhaps surprising. Taking advantage of the structure provided by uniformization, we show that solvability of the DPI implies there is a natural continuous surjection of the Martin boundary onto the geometric boundary at infinity. Finally, we give matched upper and lower radial curvature bounds that imply the natural identification of the geometric and Martin boundaries (for Cartan-Hadamard surfaces) that are more generous than the bounds that are known in arbitrary dimension.
\end{abstract}

\maketitle

\section{Introduction}

The space of positive harmonic functions on a Riemannian manifold and its relationship to geodesic geometry has been studied for many years, by both analytic and stochastic methods. One natural context in which to study this question is that of Cartan-Hadamard manifolds satisfying radial curvature bounds. In particular, if $M$ is a Cartan-Hadamard manifold of dimension $n$, any point $p$ is a pole (meaning the exponential map at $p$ is a diffeomorphism), so that there are polar coordinates $(r,\theta)\in [0,\infty)\times \mS^{n-1}$ on $M$ based at $p$. Then we are interested in curvature bounds that depend on $r$.

\subsection{Basic definitions} A Cartan-Hadamard manifold has a geometric boundary at infinity $\SI(M)$, which is topologically a sphere of dimension $n-1$, given abstractly as equivalence classes of rays, and there is a natural topology, the cone topology, on $M\cup \SI(M)$ making it compact. Concretely, for any pole $p$ and associated polar coordinates $(r,\theta)$, $\SI(M)$ can be identified with the unit sphere in $T_pM$ so that $\theta$ gives coordinates on $\SI(M)$, and for any other pole $p'$ and polar coordinates $(r',\theta')$, the induced map from $\theta$ to $\theta'$ is a homeomorphism. Moreover, a sequence of points $(r_i,\theta_i)$ in $M$ converges to a point $\hat{\theta}_0\in \SI(M)$ if and only if $r_i\ra\infty$ and $\theta_i\ra \hat{\theta}_0$.

Let $g$ be a continuous function on $\SI(M)$. Then the Dirichlet problem at infinity (abbreviated DPI in what follows) on $M$ for boundary data $g$ is the problem of finding a function $u$ which is continuous on $M\cup \SI(M)$, harmonic on $M$, and such that $u|_{\SI(M)} = g$. We say the DPI is solvable on $M$ if the DPI has a unique solution for every choice of continuous boundary data $g$. One of our primary interests is curvature conditions on $M$ which imply solvability of the DPI.

More generally, we can consider the Martin boundary of $M$, which we denote $\partial M$, and the associated Martin compactification  $M\cup \partial M$. (In general, one can see \cite{SchoenYau} for the basic definitions, but soon we will specialize to surfaces, for which a simpler description of the Martin boundary can be given. Intuitively, the Martin boundary encodes the space of all positive harmonic functions on $M$.) Then one might also hope for curvature conditions under which $\SI(M)$ and $\partial M$ are naturally homeomorphic (meaning that if one considers the geometric compactification of $M$ and the Martin compactification of $M$, then the identity map on $M$ extends to a homeomorphism of these two compactifications), which is a stronger result than solvability of the DPI.

Before briefly describing the previous results on the DPI and the identification of the Martin boundary under radial curvature bounds, we note that there is an enormous literature on potential theory, in a variety of contexts, via both analytic and stochastic methods. In terms of other approaches to Cartan-Hadamard manifolds, we mention the following (incomplete list of) examples of criteria other than the type of radial curvature bounds with which we work-- convexity at infinity (see \cite{Choi}), visibility at the boundary and Gromov hyperbolicity (see \cite{Kifer}), and (ratios of) radial curvature bounds sufficient to also treat $p$-Laplacians (see \cite{HAndV}).

\subsection{Previous work in arbitrary dimensions}\label{Sect:Previous}
It was famously shown in \cite{AS} that, if the sectional curvatures of $M$ are pinched between negative constants, then $\SI(M)$ and $\partial M$ are naturally homeomorphic. To go beyond this, one can consider radial curvature bounds in which the the upper bound is allowed to decay to 0 and/or the lower bound is allowed to blow up to $-\infty$. In this context, the best known radial curvature bounds for solvability of the DPI are due to Hsu \cite{EltonBook} (see also \cite{EltonArticle}). By studying the angular behavior of Brownian motion, he proved that if $M$ is a Cartan-Hadamard manifold (of any dimension), and for some pole $p$ one of the followings set of curvature bounds holds:
\begin{itemize}
\item for constants $0<\lambda<2$ and $C>0$,
\[
-Ce^{\lambda r} \leq \mathrm{Ric} \quad\text{and}\quad K\leq -1 ,
\]
or
\item for positive constants $\alpha>2$, $\beta<\alpha-2$,
\[
-r^{2\beta} \leq \mathrm{Ric} \quad\text{and}\quad K\leq -\frac{\alpha(\alpha-1)}{r^2}
\]
outside of a compact,
\end{itemize}
then the DPI is solvable on $M$. Here $K$ is the sectional curvature as a function of $r$, $\theta$, and a plane in $T_{(r,\theta)}M$, and $\mathrm{Ric}$ is the Ricci curvature as a function of $r$, $\theta$, and a unit vector in $T_{(r,\theta)}M$. (Note that in the first set of conditions, the upper bound of $-1$ is just a choice of normalization; rescaling by a factor of $a>0$ gives an upper bound of $-a^2$ for the sectional curvature and corresponding lower bound of $-Ce^{\lambda a r}$, perhaps for a different choice of $C>0$ and $\lambda\in(0,2)$, for the Ricci curvature.) The proof of these results amounts to showing that a certain series has a finite sum, where the size of the terms coming from a given (higher-dimensional) annulus (in $r$) is controlled by the upper curvature bound, while the number of terms coming from a given annulus is (primarily) controlled by the lower curvature bound. Thus any pair of bounds that adequately controls this series should be sufficient to obtain the result, and the two cases above are just particularly natural pairs (indeed, see Proposition 6.1.2 of \cite{EltonBook} and the surrounding discussion). We encounter a similar phenomenon in our Theorem \ref{THM:MartinId}, as exhibited in the argument in Section \ref{Sect:MartinId}.

Recently, Ji \cite{RanJi} gave an analytic proof of (essentially) Hsu's results, and furthermore, gave similar, but more restrictive, curvature bounds under which the geometric and Martin boundaries can be shown to naturally homeomorphic. In particular, he showed that, for such an $M$, if either of the following sets of curvature bounds holds (relative to some pole):
\begin{itemize}
\item for constants $0<\lambda<\frac{2}{3}$ and $C>0$,
\[
-Ce^{\lambda r} \leq K\leq -1 ,
\]
or
\item for positive constants $\alpha>2$ and $\beta<\frac{\alpha-4}{3}$,
\[
-r^{2\beta} \leq \mathrm{Ric} \quad\text{and}\quad K\leq -\frac{\alpha(\alpha-1)}{r^2}
\]
outside of a compact,
\end{itemize}
then the geometric and Martin boundaries of $M$ are naturally homeomorphic.

\subsection{Previous work for surfaces}
The case of surfaces (that is, $n=2$ in the above) is substantially different from the higher dimensional case. In particular, Kendall and Hsu \cite{HsuKendall} showed that on a Cartan-Hadamard surface, the upper curvature bound
\[
K \leq -\frac{C}{r^2} \quad \text{for $r>R$}
\]
(relative to some pole, where $C$ and $R$ are positive constants) is sufficient for solvability of the DPI, with no lower curvature bound required. (In this case, $K$ reduces to a scalar function of $(r,\theta)$, namely the Gauss curvature.) That a lower curvature bound is needed in dimension 3 and higher is known from the work of Ancona \cite{AnconaCounterEx}, where a variety of examples of Cartan-Hadamard manifolds of dimension 3 with all sectional curvatures bounded above by $-1$ but on which the DPI is not solvable are constructed (the ways in which the DPI fails to be solvable and its relationship to the behavior of Brownian motion and the existence of bounded harmonic functions is an interesting aspect, which is explored in \cite{AntonMarcDPI}). Kendall and Hsu note that their method does not extend to a better upper bound. In \cite{MyDPI}, the author developed a different, though still probabilistic, approach to show that the DPI on a Cartan-Hadamard surface is solvable under the classical condition for transience of Milnor \cite{Milnor}, that
\[
K \leq -\frac{1+\eps}{r^2 \log r} \quad\text{for $r>R$}
\]
for some $\eps>0$ and some $R>1$.

\subsection{Summary of results}
The purpose of the present work is to improve this result for solvability of the DPI on Cartan-Hadamard surfaces, in particular, to give the best possible radial curvature bound, and to give curvature bounds under which the Martin boundary and geometric boundary on such a surface are naturally homeomorphic which are more generous than the bounds for higher dimensions given above. We now describe these results in more detail.

Recall that in polar coordinates around a pole $p$, the metric on $M$ can be written as 
\[
ds^2 = dr^2 + J^2(r,\theta) \d\theta^2
\]
where the notation is chosen to reflect the fact that, for each $\theta$, $J(r,\theta)$, as a function of $r$, is the length of the Jacobi field perpendicular to the radial geodesic ray $r\mapsto (r,\theta)$. Then our fundamental criterion for solvability of the DPI in terms of $J(r,\theta)$ is the following (see Section \ref{Sect:InJ}).

\begin{THM}\label{THM:MainDPI}
Let $M$ be a Cartan-Hadamard surface with polar coordinates around a point $p$. Suppose that for any $\theta_0$, any $\beta\in(0,\pi)$, and any $\eps>0$, there exists $\rho>0$ such that
\begin{equation}\label{Eqn:MainHypo}
\int_{\{ r>\alpha , |\theta- \theta_0|<\beta\}} \frac{1}{J^2(r,\theta)}\,d(\area)
=  \int_{\{ r>\alpha , |\theta- \theta_0|<\beta\}} \frac{1}{J(r,\theta)}\,dr\,d\theta < \eps
\end{equation}
whenever $\alpha>\rho$. Then the DPI on $M$ is solvable. 
\end{THM}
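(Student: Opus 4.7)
The plan is to proceed probabilistically, using the angular dynamics of Brownian motion. Let $X_t = (r_t,\theta_t)$ denote Brownian motion on $M$ from $x$. Because $\Lap = \partial_r^2 + (J_r/J)\partial_r + J^{-2}\partial_\theta^2$ in the given polar coordinates, the angular coordinate satisfies $d\theta_t = J^{-1}(r_t,\theta_t)\,dW_t$ for an auxiliary scalar Brownian motion $W$, and is a continuous local martingale with quadratic variation $\QV{\theta}_t = \int_0^t J^{-2}(X_s)\,ds$. The goal is to show that $X_t$ converges almost surely to a random limit $X_\infty \in \SI(M)$ and that the law of $X_\infty$ depends continuously on $x$, extending to a point mass at $\hat\theta_0$ as $x\to\hat\theta_0$ in the cone topology. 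Granted this, $u(x) := \E^x[g(X_\infty)]$ is harmonic on $M$ by the strong Markov property, continuous on $M\cup\SI(M)$, and has boundary values $g$, so it solves the DPI.

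First I would verify that the hypothesis implies transience of $X$. Applied with $\eps = 1$ and a finite cover of $\mS^1$ by sectors of width less than $\pi$, the hypothesis yields $\int_{\{r>\rho\}} J^{-1}\,dr\,d\theta < \infty$ for some $\rho$, whence by Fubini $\int_\rho^\infty J^{-1}(r,\theta)\,dr < \infty$ for almost every $\theta$; comparison with the rotationally symmetric model then gives $r_t\to\infty$ almost surely. The heart of the argument is then a confinement estimate. Given $\hat\theta_0\in\SI(M)$ and $\beta\in(0,\pi)$, choose coordinates with $\theta = 0$ corresponding to $\hat\theta_0$, apply the hypothesis with $\theta_0=0$ and small $\eps>0$ to obtain a radius $\rho$, and for $x=(r_0,0)$ with $r_0>\rho$ let $T$ be the exit time of $X$ from the sector $\Gamma = \{r>\rho,\ |\theta|<\beta\}$. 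Then $\theta_{t\wedge T}$ is an $L^2$ martingale, and optional stopping together with the Ito isometry gives
\[
\E^x\lb \theta_T^2 \rb \;=\; \E^x\lb \int_0^T J^{-2}(X_s)\,ds \rb \;=\; \int_\Gamma G_\Gamma(x,y)\,J^{-1}(y)\,dr\,d\theta,
\]
where $G_\Gamma$ is the Green's function of $\Gamma$. Doob's $L^2$-maximal inequality then bounds the probability of exiting $\Gamma$ through the lateral sides $\{\theta=\pm\beta\}$ by $\beta^{-2}$ times this integral.

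The main obstacle is to control $\int_\Gamma G_\Gamma(x,y)\,J^{-1}(y)\,dr\,d\theta$ by the hypothesis bound $\int_\Gamma J^{-1}\,dr\,d\theta < \eps$. My plan is to exploit two-dimensional conformal invariance: since $\Gamma$ is simply connected, uniformization supplies a conformal diffeomorphism onto a Euclidean strip (or half-plane) under which $G_\Gamma$ is preserved while $dr\,d\theta$ picks up a conformal factor expressible through $J$. Away from $x$ the Green's function of the model domain is bounded, and the contribution from a small ball around $x$, where $G_\Gamma$ has a logarithmic singularity, is absorbed using that $J(x)$ is large when $r_0$ is large. This should yield a uniform estimate of the form $\int_\Gamma G_\Gamma(x,y)\,J^{-1}(y)\,dr\,d\theta \leq C\eps$ whenever $x$ is sufficiently deep in $\Gamma$.

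With the confinement estimate in place, iterating through a sequence of nested sectors of widths $\beta_n\to 0$ shows that $\theta_t$ is almost surely Cauchy, so $X_t$ converges to some $X_\infty\in\SI(M)$. Applying the same estimate from a starting point $x$ deep inside a thin sector about $\hat\theta_0$ shows that $X_\infty$ lies in any slightly wider sector with probability close to one, which is exactly the continuity $u(x)\to g(\hat\theta_0)$ as $x\to\hat\theta_0$ required to solve the DPI. The real content of the proof is therefore the conformal Green's function estimate; the rest is a standard probabilistic assembly.
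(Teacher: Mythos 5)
There are two genuine gaps, and they sit exactly at the points you flagged as "the real content." First, your formula for the Laplacian is wrong: in the coordinates $ds^2=dr^2+J^2\,d\theta^2$ one has $\Lap f=\partial_r^2f+\frac{\partial_r J}{J}\partial_rf+\frac1J\partial_\theta\lp\frac1J\partial_\theta f\rp$, so $\Lap\theta=-\partial_\theta J/J^3$, which does not vanish unless $J$ is independent of $\theta$. Hence $\theta_t$ is \emph{not} a local martingale; it carries a drift $\Lap\theta\,dt$ of no particular sign, and your identity $\E^x[\theta_T^2]=\E^x\lb\int_0^TJ^{-2}\,ds\rb$ fails. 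The paper keeps the drift, writes $\E^{\mu}\lb(\theta-\theta_0)^2_{\sigma_n}\rb=\int G(\theta-\theta_0)\Lap\theta\,d(\area)+\int G|\nabla\theta|^2\,d(\area)$, and integrates by parts so that the $(\theta-\theta_0)\Lap\theta$ term cancels the $|\nabla\theta|^2$ term, leaving only $-\int(\theta-\theta_0)\ip{\nabla G}{\nabla\theta}\,d(\area)$.

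Second, and more seriously, the proposed bound $\int_\Gamma G_\Gamma(x,y)J^{-1}\,dr\,d\theta\le C\eps$ via "the Green's function of the model domain is bounded away from $x$" is not a proof. Conformal invariance expresses $G_\Gamma(x,y)$ through the half-plane Green's function at $\phi(x),\phi(y)$, but to conclude $G_\Gamma(x,\cdot)\le C$ off a neighborhood $B$ of $x$ you need $|\phi(y)-\phi(x)|$ bounded below for $y\notin B$, i.e., quantitative control of the uniformizing map $\phi$, which depends on the unknown metric and is precisely what is unavailable; the near-$x$ contribution is an expected occupation time and is not controlled merely by $J$ being large there. The paper sidesteps any pointwise Green's function bound: it replaces the starting point by a probability measure $\mu$ spread along a segment of the ray, chosen (Lemma \ref{Lem:Mu}) so that $G^S_\mu\equiv1$ on $\supp\mu$ and the Dirichlet energy $\int_S|\nabla G^S_\mu|^2\,d(\area)$ equals exactly $2$ --- a conformally invariant quantity computable on $\bH^2$, and finite only because $\mu$ is not a point mass. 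Cauchy--Schwarz applied to $-\int(\theta-\theta_0)\ip{\nabla G}{\nabla\theta}\,d(\area)$ then gives the bound $\delta\,\|\nabla\theta\|_{L^2}\|\nabla G^{S_n}_\mu\|_{L^2}\le\delta\sqrt{\eps}\sqrt{2}$, where $\|\nabla\theta\|_{L^2(S)}^2=\int_SJ^{-1}\,dr\,d\theta<\eps$ is exactly the hypothesis. Starting from $\mu$ rather than a point is compatible with solvability because Theorem \ref{THM:Corraled} requires the confinement estimate only from \emph{some} point at each large radius, and averaging over $\mu$ produces one. Your overall frame (transience, angular confinement, probabilistic assembly) matches the paper's; the two steps above are where the actual work lies, and as written they do not go through.
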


It is interesting to observe that a sufficient condition for transience on a surface with a pole, going back to Doyle \cite{Doyle} (but see Theorem 12.1 of \cite{Grigoryan} for a rigorous proof) is as follows. If the (Lebesgue) measure of the set
\[
\lc \theta\in\mS^1 : \int^{\infty} \frac{1}{J(r,\theta)}\, dr <\infty  \rc
\]
is positive, then $M$ is transient. Equation \eqref{Eqn:MainHypo} implies that $\int^{\infty} 1/J(r,\theta) \,dr$ is finite for (Lebesgue) almost every $\theta\in \mS^1$. This certainly means that $M$ is transient, as it must be for the DPI to be solvable, but also makes Theorem \ref{THM:RadialDPIInf} below seem somewhat more plausible.

Theorem \ref{THM:MainDPI} is flexible enough to accommodate flats, as described in Section \ref{Sect:Flats}. Moreover, curvature bounds give estimates on $J$ via the Jacobi equation (see Section \ref{Sect:RadialBounds}). This allows us to show that, perhaps surprisingly in spite of the above, any radial curvature bound that implies transience also implies solvability of the DPI (which is not true in higher dimensions, as seen from Euclidean space). We can record this, slightly informally, as follows, with a more precise statement and proof given in Section \ref{Sect:RadialBounds}.
\begin{THM}[informal version of Theorem \ref{THM:RadialDPI}]\label{THM:RadialDPIInf}
Let $M$ be a Cartan-Hadamard surface, and suppose that $M$ satisfies a transient upper curvature bound. Then the DPI on $M$ is solvable.
\end{THM}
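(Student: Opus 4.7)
The plan is to reduce Theorem \ref{THM:RadialDPIInf} to the integral criterion of Theorem \ref{THM:MainDPI} by invoking the standard Jacobi (Sturm) comparison. First I would make the informal statement precise by declaring a transient radial upper curvature bound to mean a function $k\colon [0,\infty)\to [0,\infty)$, continuous (or at least measurable with enough regularity for the Jacobi equation), such that $K(r,\theta)\leq -k(r)$ for all $(r,\theta)$ with $r$ sufficiently large, and such that the rotationally symmetric model surface with curvature $-k(r)$ is transient. Explicitly, if $h$ denotes the solution of
\[
h''(r) = k(r)\,h(r),\qquad h(0)=0,\quad h'(0)=1,
\]
then the model is a surface of revolution with warping function $h$, and Milnor's criterion (Section 1.3 above) says it is transient precisely when $\int^{\infty} dr/h(r) <\infty$. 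This is the hypothesis we carry in what follows.

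Next I would apply the Sturm comparison theorem to the radial Jacobi equation. Recall that for each fixed $\theta$ the length $J(r,\theta)$ of the perpendicular Jacobi field along the ray $r\mapsto(r,\theta)$ satisfies
\[
\partial_r^2 J(r,\theta) = -K(r,\theta)\,J(r,\theta),\qquad J(0,\theta)=0,\quad \partial_r J(0,\theta)=1.
\]
Since $-K(r,\theta)\geq k(r)$ and the initial data for $J(\cdot,\theta)$ and $h$ agree, the Sturm comparison theorem yields $J(r,\theta)\geq h(r)$ for all $r\geq 0$ and all $\theta\in \mS^1$ (in the region where the curvature bound holds; outside, one absorbs a compact region into a constant by continuity of $J$ and positivity away from $r=0$).

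With this pointwise lower bound in hand, verifying the hypothesis \eqref{Eqn:MainHypo} becomes a one-line estimate. For arbitrary $\theta_0\in\mS^1$, $\beta\in(0,\pi)$, and $\alpha$ sufficiently large,
\[
\int_{\{r>\alpha,\ |\theta-\theta_0|<\beta\}} \frac{1}{J(r,\theta)}\,dr\,d\theta
\leq \int_{-\beta}^{\beta}\!\!\int_{\alpha}^{\infty} \frac{1}{h(r)}\,dr\,d\theta
= 2\beta \int_{\alpha}^{\infty}\frac{dr}{h(r)}.
\]
Because $\int^{\infty} dr/h(r)$ converges by the transience assumption, the tail integral tends to $0$ as $\alpha\to\infty$; hence for any $\eps>0$ there is $\rho$ so that the displayed quantity is below $\eps$ whenever $\alpha>\rho$. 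An application of Theorem \ref{THM:MainDPI} then gives solvability of the DPI.

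I do not expect any serious obstacle: the substance of the theorem is carried entirely by Theorem \ref{THM:MainDPI} and by the elementary fact that the model-surface lower bound from Sturm comparison allows the angular variable to be integrated trivially. The only minor issues are bookkeeping — treating the (compact) region where the curvature bound is not assumed, handling regularity of $k$ at the pole, and formalizing the phrase ``transient upper curvature bound'' so that the Milnor integral $\int^{\infty} dr/h(r)$ makes sense and governs the conclusion. The precise statement in Section \ref{Sect:RadialBounds} will presumably phrase this in a convenient class of comparison functions.
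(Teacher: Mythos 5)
Your proposal is correct and follows essentially the same route as the paper: the precise statement (Theorem \ref{THM:RadialDPI}) formalizes the transient upper bound via a rotationally symmetric comparison surface $M^*$, uses Jacobi/Sturm comparison to get $J(r,\theta)\geq J^*(r)$, invokes Milnor's criterion for finiteness of $\int^\infty dr/J^*$, and bounds the angular integral by $2\beta$ times that tail to verify the hypothesis of Theorem \ref{THM:MainDPI}. The only cosmetic difference is that the paper assumes the comparison $0\geq K^*\geq K$ for all $r$, which absorbs the compact-region bookkeeping you mention.
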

This is nice in part because whether or not an upper curvature bound implies transience essentially reduces to a tractable ODE question, and this allows one to consider other functional forms for the curvature bound in a systematic way. We discuss this in Section \ref{Sect:Radial}.

For a transient Cartan-Hadamard surface $M$, uniformization implies that $\partial M$ is $\mS^1$, thought of as the boundary of the unit disk under a conformal map (and $\partial M$ is also the minimal Martin boundary). Even though $\partial M$ and $\SI(M)$ are both copies of $\mS^1$, it is not true in general that they are ``the same $\mS^1$,'' that is, they may not be naturally homeomorphic as discussed above. However, the potential relationships between them can be described in more detail for surfaces than in higher dimensions, and in Section \ref{Sect:Relating} we describe the various possibilities. Moreover, if the DPI is solvable, we can give the following relationship between $\SI(M)$ and $\partial M$, and a natural condition characterizing when they are homeomorphic.
\begin{THM}\label{THM:Homeo}
Let $M$ be a Cartan-Hadamard surface on which the DPI is solvable. Then there is a natural surjection of the Martin boundary onto $\SI(M)$, and this surjection is continuous. Further, this surjection is a homeomorphism if and only if the hitting measure of Brownian motion (started from any point) on $\SI(M)$ has no atoms, or equivalently, if and only if each geodesic ray (from any pole) accumulates at a single point of the Martin boundary.
\end{THM}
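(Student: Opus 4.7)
The strategy uses uniformization: since DPI solvability on a transient Cartan--Hadamard surface forces $M$ to be conformally equivalent to the unit disk $\mathbb{D}$, the Martin compactification $M\cup \partial M$ is homeomorphic to $\bar{\mathbb{D}}$ and all points of $\partial M$ are minimal. Brownian motion on $M$, being a time change of Brownian motion on $\mathbb{D}$, converges almost surely to a point $X_\infty^M \in \partial M$, and by DPI solvability it also converges almost surely to a point $X_\infty^{\mathrm{geo}} \in \SI(M)$, producing a joint hitting law on $\partial M \times \SI(M)$. For each $\xi \in \partial M$, consider the Doob $h$-process with $h = K(\cdot, \xi)$: minimality of $K(\cdot, \xi)$ makes the tail $\sigma$-algebra of this process trivial, and combined with the disintegration of the a.s.\ joint convergence of unconditioned Brownian motion at both boundaries, this shows that $X_\infty^{\mathrm{geo}}$ is a.s.\ a deterministic point $f(\xi) \in \SI(M)$ under the $h$-process. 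This defines the natural surjection $f: \partial M \to \SI(M)$.

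Continuity of $f$ is the main technical point and the main obstacle. When $\xi_n \to \xi$ in Martin topology, $K(\cdot, \xi_n) \to K(\cdot, \xi)$ uniformly on compact subsets of $M$, so the corresponding $h_n$-processes converge to the $h$-process on every bounded time window. Applying this to the DPI solutions $u_g$ for $g \in C(\SI(M))$, which satisfy $u_g(X_t) \to g(X_\infty^{\mathrm{geo}})$ almost surely under each of these laws, together with dominated convergence, forces the hitting distributions on $\SI(M)$ of the $h_n$-processes to converge weakly to that of the $h$-process, so that $\delta_{f(\xi_n)} \to \delta_{f(\xi)}$ and hence $f(\xi_n) \to f(\xi)$. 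As a consequence, $f$ is also the continuous extension of the identity on $M$: if a sequence $y_n \to \xi$ in Martin topology had two distinct geometric subsequential limits $\hat\theta_1 \neq \hat\theta_2$, the corresponding hitting measures $\mu_{y_n}$ would have weak subsequential limits $\delta_{\hat\theta_1}$ and $\delta_{\hat\theta_2}$ by DPI solvability; but weak convergence of Poisson kernels in $\mathbb{D}$ combined with the continuity of $f$ forces $\mu_{y_n} \to \delta_{f(\xi)}$, a contradiction. Surjectivity now follows since $f(\partial M)$ is compact and contains $\mathrm{supp}(\mu_p) = \SI(M)$, the full support coming from the fact that $\mu_x \to \delta_{\hat\theta}$ weakly as $x \to \hat\theta$ geometrically.

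For the equivalences, $f$ is a continuous degree-one surjection between two copies of $\mathbb{S}^1$ and is circularly monotone, so it is a homeomorphism if and only if it is injective, if and only if no preimage $f^{-1}(\hat\theta)$ is a nondegenerate arc. Since $\nu_p$ is equivalent to Lebesgue on $\partial M$, a nondegenerate preimage arc corresponds precisely to an atom of $\mu_p = f_*\nu_p$ at $\hat\theta$, giving the equivalence with the no-atom condition on the hitting measure. For the geodesic ray condition, since $f$ is the continuous extension of the identity, any Martin accumulation point $\xi$ of a ray $\gamma_{\hat\theta}$ from any pole must satisfy $f(\xi) = \hat\theta$, so unique Martin accumulation for every ray is equivalent to every preimage $f^{-1}(\hat\theta)$ being a singleton, i.e.\ $f$ being a homeomorphism; the reverse implication (that an atom of $\mu_p$ produces some ray with non-singleton Martin accumulation) is handled by exploiting the freedom in the choice of pole to realize the distinct points of a preimage arc as accumulations of appropriately chosen rays.
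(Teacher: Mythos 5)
Your construction of the map is genuinely different from the paper's, but as written it has gaps that I don't think are merely expository. First, defining $f(\xi)$ for \emph{every} $\xi\in\partial M$ via the disintegration of the joint law of $(X^M_\infty,X^{\mathrm{geo}}_\infty)$ only produces $f$ for $\nu_p$-almost every $\xi$: the $h$-process law and the unconditioned law are mutually singular on the tail $\sigma$-algebra, so the a.s.\ convergence of $\theta_t$ does not transfer to the $h_\xi$-process for an arbitrary fixed $\xi$, and tail-triviality of the $h_\xi$-process gives a deterministic limit only once you know the limit exists under that conditioned law. Second, and more seriously, the continuity argument does not close: locally uniform convergence $K(\cdot,\xi_n)\to K(\cdot,\xi)$ gives convergence of the $h_n$-processes on bounded time windows, but $g(X^{\mathrm{geo}}_\infty)$ is a tail functional, and weak convergence on compact time intervals says nothing about expectations of tail functionals without a uniform-in-$n$ control of the behavior near infinity (you would need to interchange $\lim_t$ and $\lim_n$ in $\E_{h_n}[u_g(X_t)]$, and no uniformity is supplied; note also that $u_g(X_t)$ is not a martingale under the $h_n$-law, so even the a.s.\ convergence you invoke there needs justification). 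Third, the circular monotonicity of $f$ is asserted but never proved; a continuous degree-one surjection of $\mS^1$ need not be monotone, and for your probabilistically defined $f$ this is exactly the point where a planar-topology input is required. Finally, the equivalence between a nondegenerate preimage arc and a geodesic ray accumulating at more than one Martin point is only gestured at (``exploiting the freedom in the choice of pole''); with your definition of $f$ it is not established that $f^{-1}(\hat\theta)$ coincides with the accumulation set of $\gamma_{\hat\theta}$, which is what that equivalence needs.

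For contrast, the paper avoids all of the $h$-process machinery by defining the map directly through the geometry of the uniformization: each ray $\gamma_\theta$, viewed in the disk $D$, has a closed connected set of accumulation points on $\partial D$, and the key step is that no point of $\partial D$ can be an accumulation point of two distinct rays. This is proved by combining the fact that the rays $\gamma_\theta$ are pairwise non-crossing curves in $D$ (so two rays accumulating at the same $\hat\varphi$ would force a whole arc of rays to accumulate there), the absolute continuity of harmonic measure on $\partial D$, and condition (2) of Theorem \ref{THM:Basic}. Monotonicity, continuity, and the atom/arc dictionary then all fall out of the non-crossing structure and the fact that harmonic measure on $\partial D$ has a density bounded below. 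If you want to salvage your route, the missing ingredients are precisely substitutes for these planar facts (e.g.\ boundary Harnack-type uniformity to get continuity of the conditioned exit laws, and an argument identifying $f^{-1}(\hat\theta)$ with the accumulation set of $\gamma_{\hat\theta}$); as it stands the hardest steps are the ones left unproved.
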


Starting from this condition and uniformization, we can give a natural geometric approach to proving that certain pairs of upper and lower curvature bounds imply  that $\SI(M)$ and $\partial M$ are naturally homeomorphic. In particular, we have the following, proven in Section \ref{Sect:MartinId}.

\begin{THM}\label{THM:MartinId}
Let $M$ be a Cartan-Hadamard surface, and suppose that, for some $p\in M$, we have one of the following pairs of upper and lower radial curvature bounds
\begin{itemize}
\item $K$ is bounded from below and, for some $\eps>0$ and $R>0$,
\[
K(r,\theta)\leq -\frac{2+\eps}{r^2} \quad \text{for $r>R$}
\]
or,
\item $K\leq -1$ and for some $C>0$ and $R>0$ and some $0<\lambda<2$,
\[
K(r,\theta)\geq -C e^{\lambda r} \quad \text{for $r>R$.}
\]
\end{itemize}
Then the Martin boundary of $M$ is naturally homeomorphic to $\bS_{\infty}(M)$.
\end{THM}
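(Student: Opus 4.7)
My plan proceeds in three stages. First, observe that both sets of curvature hypotheses imply solvability of the DPI: the bound $K\leq-(2+\eps)/r^2$ is strictly stronger than the Milnor transience bound $K\leq-(1+\eps)/(r^2\log r)$, and $K\leq-1$ is stronger still, so Theorem~\ref{THM:RadialDPIInf} applies in each case. By Theorem~\ref{THM:Homeo}, it then suffices to show that the hitting measure of Brownian motion on $\SI(M)$ has no atoms, or equivalently that the natural continuous surjection $\Psi:\partial M\to\SI(M)$ is injective.

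Second, I would invoke uniformization to identify $M$ conformally with the unit disk $\mathbb{D}$. Since harmonicity is conformally invariant in two dimensions, $\partial M$ is identified with $\mS^{1}=\partial\mathbb{D}$ and the harmonic measure is equivalent to Lebesgue measure. Thus $\Psi$ is a continuous degree-one surjection of circles, and a failure of injectivity amounts to some $\hat\theta_0\in\SI(M)$ having a closed arc of positive Lebesgue measure as preimage. The goal is to rule this out for every $\hat\theta_0$.

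Third, for each $\hat\theta_0\in\SI(M)$ and small $\delta>0$, I would use solvability of the DPI to produce a bounded harmonic function $u_\delta$ on $M$ whose continuous extension to $\SI(M)$ equals $1$ outside the angular wedge $W_\delta:=\{|\theta-\theta_0|<\delta\}$ and $0$ on $W_{\delta/2}$, and then show $u_\delta(p)\to 1$ as $\delta\to 0$. Since $1-u_\delta(p)$ dominates the hitting measure of $W_\delta$, this would force $\Psi_{*}\mu(\{\hat\theta_0\})=0$. To bound $1-u_\delta(p)$ from above, partition $\{r>R\}$ into annuli $A_n=\{R_n\leq r<R_{n+1}\}$ and sum contributions annulus-by-annulus. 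The upper curvature bound, via the Jacobi equation, forces $J(r,\theta)$ to grow at least as fast as $r^{1+\eta}$ (for some $\eta>0$) in the polynomial case and at least as fast as $\sinh r$ in the exponential case, quantifying how angular spreading prevents harmonic measure from concentrating in $W_\delta$. The lower curvature bound controls the angular Hessian of $J$, keeping $J(r,\theta)$ comparable across $W_\delta$ so the geometric widths behave proportionally.

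The main obstacle is executing this series estimate cleanly under the generous radial bounds, so that the annular contributions sum to something vanishing in $\delta$. This is precisely the ``series phenomenon'' flagged in Section~\ref{Sect:Previous}: the upper bound controls each term's size and the lower bound controls how many terms contribute at each scale. In the polynomial case, convergence will hinge on the constant exceeding the threshold $2$ (so that $r^{1+\eta}$ dominates linear integration of $1/J$), and in the exponential case on $\lambda<2$ (matching $\sinh r$ growth against the $e^{\lambda r/2}$ allowance from the lower bound). Both thresholds are more generous than in \cite{RanJi} because uniformization replaces the higher-dimensional geodesic spray comparison used there with a direct angular integration on $\mS^{1}$.
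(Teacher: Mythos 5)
Your first two stages are fine and match the paper's reduction: both curvature hypotheses give DPI solvability via Theorem \ref{THM:RadialDPI}, and Theorem \ref{THM:Homeo} reduces the claim to showing the hitting measure on $\SI(M)$ has no atoms (equivalently, no geodesic ray accumulates on a nontrivial arc of $\partial D$). But the third stage, which is where all the work lives, is not a proof: you describe a Hsu-style annulus-by-annulus series estimate and then explicitly flag ``executing this series estimate cleanly'' as the main obstacle. That estimate is the theorem; without it you have only restated the problem. Moreover, the mechanism you propose for the lower curvature bound --- that it ``controls the angular Hessian of $J$, keeping $J(r,\theta)$ comparable across $W_\delta$'' --- does not follow from a radial curvature bound. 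The Jacobi equation governs $J(\cdot,\theta)$ along each fixed ray separately; radial bounds give no control on $\partial_\theta J$, so there is no comparability of $J$ across a wedge to be had this way. As stated, your plan has no working role for the lower bound, and the lower bound is essential (it is what distinguishes the two cases of the theorem).

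The paper's actual argument is different and worth contrasting. Under the uniformizing map $g=e^{-(2\pi G^p+iF^p)}$ built from the Green's function, the image $\gamma_{\theta_0}$ of each geodesic ray has finite Euclidean length in $D$ iff $\int^{\infty}\|\nabla G^p(r,\theta_0)\|\,dr<\infty$; finite length forces $\gamma_{\theta_0}$ to converge to a single point of $\partial D$, which by Theorem \ref{THM:Homeo} rules out an atom at $\theta_0$. The lower curvature bound enters through the Cheng--Yau gradient estimate $\|\nabla G^p\|\leq c(k(r)+1)G^p$ on unit annuli (this is the step your sketch is missing a substitute for), and the upper curvature bound enters by comparing $G^p$ from above with the Green's function $G^*(r)=\int_r^\infty J^*(s)^{-1}ds$ of a radially symmetric comparison surface. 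The thresholds then drop out of explicit ODE computations: $J^*\sim cr^{2+\delta}$ gives $G^*\sim c'r^{-1-\delta}$, integrable, in the polynomial case; and $\int^\infty e^{\lambda r/2}(-\log\tanh(r/2))\,dr<\infty$ exactly when $\lambda<2$ in the exponential case. To salvage your approach you would need either to carry out the annular series estimate in full (essentially reproving a two-dimensional version of Hsu/Ji with sharper constants) or to replace your ``angular Hessian'' step with a genuine gradient estimate such as Cheng--Yau.
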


Note that differences in the constants in such curvature bounds generally translate into substantial differences in the asymptotic behavior of $J$ (see, for example, Section 2 of \cite{MyIll}), so depending on one's point of view, the improvement over the bounds in Section \ref{Sect:Previous} that we obtain for surfaces is considerable.

\subsection{Acknowledgements} This work was partially supported by a grant from the Simons Foundation.

\section{Background results}

From now on, we let $M$ denote a Cartan-Hadamard surface, frequently with choice of pole $p$ and polar coordinates. We begin with a more precise formulation of the basic relationship between the angular behavior of Brownian motion and the Dirichlet problem at infinity on a surface. In what follows, we let $B_t$ be Brownian motion on $M$ and $\zeta$ its lifetime (in particular, there is no need to assume $M$ is stochastically complete). Also, $\Prob^x$ and $\E^x$ refer to the probability of an event and the expectation of a random variable, respectively, with respect to Brownian motion started from $x\in M$. We will generally write $r_t$ and $\theta_t$ for $r(B_t)$ and $\theta(B_t)$, respectively.

\begin{THM}\label{THM:Basic}
Let $M$ be a Cartan-Hadamard surface. Then the following are equivalent:
\begin{enumerate}

\item Brownian motion started from some (and hence any) point in $M$ converges in the cone topology (that is, $B_{\zeta}\in \SI(M)$ exists a.s.), and for any $\hat{\theta}_0\in \SI(M)$ and any neighborhood $N\subset\SI(M)$ of $\hat{\theta}_0$,
\[
\lim_{x\rightarrow \hat{\theta}_0} \Prob^x\lp B_{\zeta}\in N\rp =1 .
\]

\item For some pole $p$ and corresponding system of polar coordinates (and hence for any $p$), for any $\theta_0$ and any $\delta>0$, there exists $R>0$ such that
\[
\Prob^{(r_0,\theta_0)}\lp \|\theta_t-\theta_0\|<\delta \text{ for all $t\in[0,\zeta)$}\rp >1-\delta 
\]
whenever $r_0>R$. (Here $\| \cdot\|$ is the standard distance on $\bS^{1}\simeq \SI(M)$.)

\item The DPI on $M$ is (uniquely) solvable.

\end{enumerate}
Given any (hence all) of these conditions, the (unique) solution to the DPI, for any continuous boundary data $g$, is given by
\begin{equation}\label{Eqn:BasicRep}
u(x) = \E^x\lb g\lp B_{\zeta}\rp\rb  \quad\text{for any $x\in M$.}
\end{equation}
\end{THM}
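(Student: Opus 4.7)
The plan is to establish the three equivalences via the cycle $(1) \Rightarrow (3) \Rightarrow (2) \Rightarrow (1)$, with the representation formula \eqref{Eqn:BasicRep} emerging from the first implication and uniqueness in (3) following from the maximum principle on $M\cup\SI(M)$ applied to the difference of two solutions.

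\textbf{$(1) \Rightarrow (3)$.} Given continuous $g$ on $\SI(M)$, define $u(x) := \E^x[g(B_\zeta)]$, which is well-defined by condition (1). Harmonicity follows from the strong Markov property applied at exit times of small geodesic balls (the mean value characterization), and continuity on $M$ from continuous dependence of BM on its starting point together with bounded convergence. For continuity at $\hat{\theta}_0\in\SI(M)$, given $\varepsilon>0$ choose a neighborhood $N$ on which $|g-g(\hat{\theta}_0)|<\varepsilon$ and estimate
\[
|u(x) - g(\hat{\theta}_0)| \leq \varepsilon + 2\|g\|_\infty\,\Prob^x[B_\zeta\notin N];
\]
the second term tends to $0$ as $x\to\hat{\theta}_0$ by condition (1). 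This simultaneously establishes \eqref{Eqn:BasicRep}.

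\textbf{$(3) \Rightarrow (2)$.} This is the main obstacle of the proof: DPI solvability is a topological statement about the $t\to\zeta$ behavior of BM, while (2) requires control over the entire trajectory, and the bridge will be the bounded-martingale property of DPI solutions. Fix $\theta_0$ and $\delta\in(0,\pi)$, and choose a continuous $g$ on $\SI(M)$ with $0\leq g\leq 1$, $g\equiv 1$ on $\{|\theta-\theta_0|\leq \delta/3\}$, and $g\equiv 0$ on $\{|\theta-\theta_0|\geq 2\delta/3\}$. Let $u$ be the associated DPI solution, and set $\tau:=\inf\{t:|\theta_t-\theta_0|\geq\delta\}$ and $\sigma_R:=\inf\{t:r_t\leq R\}$. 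By boundary continuity of $u$, for $R$ sufficiently large, $u(B_\tau)<\delta/2$ on $\{\tau<\sigma_R\wedge\zeta\}$, since there $r_\tau>R$ and $|\theta_\tau-\theta_0|=\delta$ puts $B_\tau$ in a cone where $g$ vanishes. Optional stopping applied to the bounded martingale $u(B_{t\wedge\zeta})$ at $\tau\wedge\sigma_R$ then yields, after rearrangement, $\Prob^{(r_0,\theta_0)}[\tau<\zeta]<\delta$ for $r_0$ sufficiently large, using $u(r_0,\theta_0)\to 1$ and $\Prob^{(r_0,\theta_0)}[\sigma_R<\zeta]\to 0$ as $r_0\to\infty$; the latter fact is itself a consequence of DPI solvability, applied to (mixed) boundary data $0$ on $\SI(M)$ and $1$ on $\partial B_R$. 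The auxiliary stopping time $\sigma_R$ is essential because the angular coordinate degenerates at the pole, so that $u(B_\tau)$ cannot be controlled by boundary data alone without first keeping $B_t$ far from $p$.

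\textbf{$(2) \Rightarrow (1)$.} Condition (2) implies transience: BM starting far out in direction $\theta_0$ has high probability of never returning to any given compact set, since returning would force the angular coordinate to sweep well outside the wedge $\{|\theta-\theta_0|<\delta\}$. Transience combined with iterated applications of (2) at exit times of an increasing sequence of balls yields that, almost surely, $r_t\to\infty$ and $\theta_t$ is Cauchy, hence $B_t$ converges in the cone topology. For the neighborhood-concentration half of (1), any open $N\ni\hat{\theta}_0$ contains, outside a compact set, a wedge $\{|\theta-\theta_0|<\delta\}$, and (2) directly provides $\Prob^x[B_\zeta\in N]\geq 1-\delta$ for $x$ sufficiently deep in that wedge, closing the cycle.
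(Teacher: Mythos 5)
Your architecture $(1)\Rightarrow(3)\Rightarrow(2)\Rightarrow(1)$ matches the paper's, and your $(1)\Rightarrow(3)$ is the standard argument and is fine. The problem is in the other two legs. In $(3)\Rightarrow(2)$, the step that does the real work is your claim that $\Prob^{(r_0,\theta_0)}[\sigma_R<\zeta]\to 0$ as $r_0\to\infty$, which you justify as ``a consequence of DPI solvability, applied to mixed boundary data $0$ on $\SI(M)$ and $1$ on $\partial B_R$.'' That mixed exterior problem is not the DPI as defined (boundary data on $\SI(M)$ for all of $M$), and its solvability is essentially equivalent to the very statement you are invoking it to prove: the candidate solution is $h(x)=\Prob^x[\sigma_R<\zeta]$, and the whole issue is whether $h$ tends to $0$ at $\SI(M)$, which does not follow from transience alone and is not addressed. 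The paper sidesteps this entirely: since $u$ is continuous on the compact space $M\cup\SI(M)$ and $\{u=1\}$ is contained in the open arc where $g=1$, for small $\eta$ the superlevel set $\{u\ge 1-\eta\}$ is a compact subset of the open cone $\{\|\theta-\theta_0\|<\delta\}\cup\{\|\hat\theta-\hat\theta_0\|<\delta\}$; the martingale $u(B_{t\wedge\zeta})$ staying above $1-\eta$ (which happens with high probability when $u(r_0,\theta_0)$ is close to $1$) then forces the whole path to stay in the cone, with no need to control returns to $B_R$. Your version can also be repaired without the unproved input: since $m_R:=\max_{\overline{B_R}}u<1$ by the strong maximum principle, the same optional stopping identity gives
\[
\lp 1-\tfrac{\delta}{2}\rp\Prob\lb\tau<\sigma_R\wedge\zeta\rb+(1-m_R)\Prob\lb\sigma_R\le\tau,\ \sigma_R<\zeta\rb\le 1-u(r_0,\theta_0),
\]
so both probabilities on the left tend to $0$ as $r_0\to\infty$, and their sum dominates $\Prob[\tau<\zeta]$. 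But as written, the step is a genuine gap.

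In $(2)\Rightarrow(1)$ there are two further problems. First, your justification of transience is wrong: returning to a compact set does not force $\theta_t$ to leave the wedge $\{\|\theta-\theta_0\|<\delta\}$, since that wedge contains points of every radius (transience does follow from (2), but because recurrence would force $\theta_t$ to visit $\{\|\theta-\theta_0\|\ge\delta\}$ almost surely). Second, and more importantly, condition (2) only controls Brownian motion started \emph{on the ray} $\{\theta=\theta_0\}$, with a threshold $R=R(\theta_0,\delta)$ that may depend on $\theta_0$; your ``iterated applications of (2) at exit times of an increasing sequence of balls'' requires applying (2) at the random angle of the exit point, and likewise your final step applies (2) to arbitrary $x$ deep in a wedge rather than to points on a ray. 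Neither is licensed by (2) as stated. This is exactly why the paper introduces a finite partition $\hat\theta_1<\dots<\hat\theta_n$ of $\SI(M)$ and a corralling argument: from an arbitrary starting point with $r_0$ large, the path either hits one of the two neighboring partition rays at radius $>R/2$ (where (2) applies at a partition angle) or remains trapped between them forever; combined with the harmonicity of the ``asymptotic oscillation'' function $\gamma_\delta$ and the mean value property, this yields $\gamma_\delta\equiv 1$. Without some such device to pass from finitely many rays to all starting points, your sketch of $(2)\Rightarrow(1)$ does not close.
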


\begin{proof} First note that any of these three conditions clearly implies that $M$ is transient.
That (1) implies (3) is the basis for the probabilistic approach to the DPI and is well known, as is the fact that condition (1) implies the representation of the solution given in Equation \eqref{Eqn:BasicRep}-- see Proposition 6.1.1 of \cite{EltonBook}, for example. Note that $\theta_t$ converging, starting from some $x\in M$, is a tail-measurable event, and thus the probability that $\theta_t$ converges is a harmonic function of $x$, with values in $[0,1]$. Thus if $\theta_t$ converges a.s.\ starting from some $x$, by the (strong) maximum principle, $\theta_t$ converges a.s.\ starting from any $x\in M$, which also means that the displayed expression is well defined.

We next show that (3) implies (2) holds for any $p$. Choose any $p$ with some associated polar coordinates, and choose $\theta_0$ and $\delta$. Then we can take a continuous $g$ on $\SI(M)$ such that $0\leq g \leq 1$, $g= 1$ on some neighborhood of $\hat{\theta}_0$, and $g(\hat{\theta})=0$ if $\|\hat{\theta}-\hat{\theta}_0\| \geq \delta/2$. Solvability of the DPI implies that there is a unique harmonic function $f$ on $M$ with boundary values given by $g$. If we now consider $f(B_t)$ for Brownian motion started at $x$, then $f(B_t)$ is a (continuous) martingale with values in $[0,1]$ and $\E\lb f\lp B_{t\wedge\zeta}\rp\rb=f(x)$. It follows from standard estimates for martingales that $f\lp B_{t\wedge\zeta}\rp$ converges as $t\rightarrow\zeta$ almost surely and that for any $\eta>0$ there exists $\eps>0$ such that, if $f(x)>1-\eps$, then $f\lp B_{t\wedge\zeta}\rp$ stays above $1-\eta$ with probability as least $1-\eta$. Now $f$ is continuous on $M\cup \SI(M)$ and has boundary values given by $g$, and thus, for small enough $\eta$, the superlevel set $\{f>1-\eta\}$ will be contained in the cone $\{\|\theta_t-\theta_0\|<\delta, r>0\}$. Since $f(r,\theta_0)$ converges to 1 as $r\rightarrow\infty$, we see that for large enough $R$, the ray $[R,\infty)\times\{\theta_0\}$ is contained in the superlevel set $\{f>1-\eps\}$. It follows that this $R$ satisfies the condition of (2).

Finally, we show that having property (2) for some $p$ implies (1). Choose some $\delta>0$, and let $\gamma_{\delta}(x)$  be the probability that, for Brownian motion started from $x\in M$, all the accumulation points of $\theta_t$ in $\SI(M)$ (as $t\rightarrow \zeta$) lie in an interval of length $\delta$ (that is, the ``asymptotic oscillation'' of $\theta_t$ is no more than $\delta$). Now let $\hat{\theta}_1<\ldots< \hat{\theta}_n$ be a partition of $\SI(M)$ into intervals of length less than $\delta/3$ (that is, $\| \hat{\theta}_i - \hat{\theta}_{i -1}\|<\delta/3$, where $\hat{\theta}$ is understood mod $2\pi$ and the indices are understood mod $n$, as necessary). Then by (3) and the transience of $M$, for any $\eta>0$, we can find $R>0$ such that, for all $i$,
\[
\Prob^{(r_0,\theta_i)}\lp \|\theta_t-\theta_0\|<\delta/3 \text{ for all $t\in[0,\zeta)$}\rp >1-\eta/3 
\]
whenever $r_0>R/2$, and for any $\theta$,
\[
\Prob^{(r_0,\theta)} \lp r_t>R/2 \text{ for all $t\in[0,\zeta)$}\rp >1-\eta/3 
\]
whenever $r_0\geq R$. Now consider Brownian motion started from some $(r_0,\theta_0)$ with $r_0>R$. Let $i$ be such that $\hat{\theta}_{i-1}\leq \theta_0 < \hat{\theta}_i$. Then with probability $1-\eta/3$, $r_t>R/2$ for all time. In this case, one of two things happens. Either the process hits one of the rays $(R/2,\infty)\times \{\hat{\theta}_{i-1}\}$ or $(R/2,\infty)\times \{\hat{\theta}_{i}\}$, in which case we see that $\|\theta_t-\theta_0\|< 2\delta/3$ for all $t\in[0,\zeta)$ with probability $1-\eta/3$, or the process stays between these two rays for all time, in which case $\|\theta_t-\theta_0\|< \delta/3$ for all $t\in[0,\zeta)$ with probability 1. Putting this together, we see that $\gamma_{\delta}\geq 1-\eta$ on $\{r>R\}$. Since $\gamma_{\delta}$ depends only on the asymptotic behavior of $B_t$, we see that $\gamma_{\delta}$ is harmonic. Then since $\eta$ was arbitrary, it follows from the mean-value property of $\gamma_{\delta}$ that $\gamma_{\delta}\equiv 1$ on $M$. Since this is true for every $\delta>0$, recalling the definition of $\gamma_{\delta}$, we see that $B_t$ almost surely converges in the cone topology. Next, choose some $\hat{\theta}$ and neighborhood $N$ as in (2). Then we can find $\delta>0$ such that $(\hat{\theta}-\delta, \hat{\theta}+\delta)\subset N$. From the preceding argument, we know that, for any $\eta>0$, we can find $R$ such that $\|\theta_t-\theta_0\|<\delta$ with probability $1-\eta$ whenever $r_0>R$. Then it follows that 
\[
\lim_{x\rightarrow \hat{\theta}} \Prob^x\lp B_{\zeta}\in N\rp =1 ,
\]
and we've established (1).

Also note that the preceding shows that if condition (2) holds for some $p$, then it holds for every $p$.
\end{proof}

Next, we recall two ``soft'' results that will be used in what follows, and for which we need some definitions. As usual, we assume we have polar coordinates around a pole $p$. For any $\theta_0$, $\alpha>0$, and $\beta\in (0,\pi)$, we have the (open) truncated sector
\[
S_{\theta_0}\lp \alpha,\beta\rp = \lc (r,\theta): r>\alpha , \theta\in \lp \theta_0-\beta,\theta_0+\beta\rp\rc \subset M
\]
centered around $\theta_0$. Given such an $S = S_{\theta_0}\lp \alpha,\beta\rp$, an exhaustion of $S$ will mean an increasing sequence of open subsets $S_1\subset S_2\subset \cdots$ such that $\overline{S_n}$ is compact and contained in $S$ for all $n$, and such that for any compact $\Omega\subset M$, there exists $n$ such that $\Omega\subset S_n$. Next, if $\mu$ is a probability measure supported in $S$, then we let $G_\mu^S$ be the associated Green's function, with Dirichlet boundary conditions on the (finite part of) the boundary. Recall that $G^S_{\mu}$ gives the occupation density, with respect to $d(\area)$, of Brownian motion started at $\mu$ and killed at the boundary (and explosion time). Then $G^{S_n}_{\mu}$ is defined analogously for any exhaustion of $S$ and any $n$ with $\mu$ supported in $S_n$. (All of these Green's functions will be locally integrable, since $S$ is clearly conformally equivalent to the open unit disk.)

\begin{Lemma}\label{Lem:Mu}
Let $M$ be a Cartan-Hadamard surface with polar coordinates around $p$. For some point $(r',\theta_0)$, let $S=S_{\theta_0}(\alpha,\beta)$ be any truncated sector containing $(r',\theta_0)$.  Then there exists $r''>r'$ and a probability measure $\mu$ such that the support of $\mu$ is $[r',r'']\times\{\theta_0\}$ and $G^S_{\mu}$ is identically equal to one on $[r',r'']\times\{\theta_0\}$.  We have that
\[
\int_{S}\left|\nabla G^S_{\mu}\right|^2 \, d(\text{area}) = 2 ,
\]
where $\nabla G^S_{\mu}$ is interpreted in the weak sense on $\supp(\mu)$. Further, there is an exhaustion $S_n$ of $S$ such that 
\[
\int_{S_n}\left|\nabla G^{S_n}_{\mu}\right|^2 \, d(\text{area}) < 2 \quad\text{for all $n$,} 
\]
where again $\nabla G^{S_n}_{\mu}$ is interpreted in the weak sense on $\supp(\mu)$. 
\end{Lemma}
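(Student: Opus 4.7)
The plan is to construct $\mu$ as a Frostman equilibrium measure on a suitable ray segment, tuning the length so that the associated Robin constant equals $1$. For each $R>r'$, let $L_R := [r',R]\times\{\theta_0\}$, and minimize the Green energy $I_{G^S}(\nu) := \iint G^S(x,y)\,d\nu(x)\,d\nu(y)$ over probability measures $\nu$ supported on $L_R$. Since $S$ is conformally equivalent to the open unit disk by uniformization (as already observed just before the lemma), $G^S$ is a positive, symmetric, lower semi-continuous kernel with logarithmic diagonal singularity, and standard Frostman theory produces a unique minimizer $\mu_R$ with $\supp\mu_R = L_R$ whose potential $G^S_{\mu_R}$ equals the Robin constant $\gamma(R) := I_{G^S}(\mu_R)$ everywhere on $L_R$.

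Next I would show $\gamma$ is continuous in $R$, with $\gamma(R)\to\infty$ as $R\downarrow r'$ (forced by the logarithmic diagonal singularity of $G^S$, which makes the energy of any probability measure on a shrinking segment diverge) and $\gamma(R)\to 0$ as $R\to\infty$; the latter limit is the step I expect to be the main obstacle. I would verify it in the conformal disk model, where the ray $\{r>\alpha,\,\theta=\theta_0\}$ corresponds to a Jordan arc in $\mathbb{D}$ both of whose endpoints lie on $\partial\mathbb{D}$ (one coming from $(\alpha,\theta_0)\in\partial S$ and the other from the point at infinity of $S$). As $R\to\infty$ the condenser $(L_R,\partial S)$ therefore degenerates, its two ``plates'' coming into contact at $\partial\mathbb{D}$, so the condenser capacity blows up and equivalently $\gamma(R)\to 0$. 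By the intermediate value theorem, pick $r''\in(r',\infty)$ with $\gamma(r'')=1$ and set $\mu := \mu_{r''}$; this gives a probability measure with support $[r',r'']\times\{\theta_0\}$ and $G^S_\mu\equiv 1$ on its support.

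For the energy identity and the exhaustion inequality, let $u := G^S_\mu$; in the probabilistic normalization (generator $\tfrac{1}{2}\Delta$) we have $-\tfrac{1}{2}\Delta u = \mu$ weakly on $S$ with $u = 0$ on $\partial S$. Pick any exhaustion $S_n\uparrow S$, set $u_n := G^{S_n}_\mu$, and (discarding initial indices if necessary) assume $\supp\mu\subset S_n$ for all $n$. Then $u - u_n$ is a nonnegative harmonic function on $S_n$ with strictly positive boundary values on $\partial S_n$, so the strong maximum principle gives $u_n < u$ throughout $S_n$, hence $u_n < 1$ on $\supp\mu$. Integration by parts on $S_n$ (the boundary contribution vanishing because $u_n$ does) yields
\[
\int_{S_n}|\nabla u_n|^2\,d(\area) = 2\int_{S_n}u_n\,d\mu < 2\int u\,d\mu = 2,
\]
establishing the exhaustion claim. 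For the energy identity, $u_n\uparrow u$ monotonically implies $\int u_n\,d\mu\uparrow 1$ by monotone convergence, so $\int_{S_n}|\nabla u_n|^2\uparrow 2$; combining this with lower semicontinuity of the Dirichlet energy (giving $\int_S|\nabla u|^2\leq 2$) and the reverse bound from a standard cross-term integration by parts applied to $\int_S\nabla u\cdot\nabla u_n$ (the flux terms $\int_{\partial S_n}u\,\partial_\nu u_n$ having a definite sign and vanishing in the limit) yields $\int_S|\nabla u|^2\,d(\area) = 2$.
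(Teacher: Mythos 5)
Your proposal is correct and takes essentially the same route as the paper's proof (which cites Lemma 5 of \cite{MyDPI} and sketches it): pass to the conformal disk model, tune the length of the segment by a continuity/intermediate-value argument so that the capacitary measure of $[r',r'']\times\{\theta_0\}$ is a probability measure with potential identically $1$, and then obtain the energy identity and the strict inequality on an exhaustion by integrating by parts against $-\tfrac{1}{2}\Delta G^S_{\mu}=\mu$. Your condenser-degeneration argument for $\gamma(R)\to 0$ plays the role of the paper's appeal to the uniform decay of the hyperbolic Green's function along a divergent curve, and the steps you leave implicit (continuity of the Robin constant in $R$, and the vanishing of the boundary flux term in the cross-term computation, which follows from $\sup_{\partial S_n}G^S_{\mu}\to 0$ together with the total flux of $G^{S_n}_{\mu}$ through $\partial S_n$ being $2$) are standard.
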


This is Lemma 5 of \cite{MyDPI}. Note that by conformal invariance, it is enough to prove that the result holds for divergent curves on the 2-dimensional hyperbolic space $\bH^2$ (rather than geodesics on $M$). Using that the Green's function on $\bH^2$ decays uniformly from any point (with respect to the hyperbolic distance), it's easy to see that such a $\mu$ exists, and then the rest of the theorem is an exercise in integration by parts. 

\begin{THM}\label{THM:Corraled}
Let $M$ be a Cartan-Hadamard surface with polar coordinates around a pole $p$, and suppose that, for any $\theta_0$, $R>0$, and $\delta>0$, 
there exists $r_0$ with $r_0>R$ such that
\[
\Prob^{(r_0,\theta_0)}\lp \|\theta_t-\theta_0\|<\delta \text{ for all $t\in[0,\zeta)$}\rp >1-\delta .
\]
Then the Dirichlet problem at infinity on $M$ is solvable. (Note that the condition is not that the inequality holds for every $r_0>R$, but rather just for some $r_0>R$, which is the key difference with condition (2) of Theorem \ref{THM:Basic}.)
\end{THM}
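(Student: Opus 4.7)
The plan is to verify condition (2) of Theorem \ref{THM:Basic}, from which solvability of the DPI is immediate. The difficulty lies in upgrading the hypothesis's ``some $r_0 > R$'' to the ``every $r_0 > R$'' of condition (2). I would start by observing that $M$ must be transient: a recurrent Brownian motion would visit every open subset of $\mS^1$ infinitely often, precluding any angular corralling. Hence $r_t \to \infty$ almost surely.

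Next, fix $\theta_0$ and $\delta > 0$, and consider the bounded harmonic function $f(x) = \Prob^x(\|\theta_t - \theta_0\| < \delta \text{ for all } t \in [0, \zeta))$ on the wedge $W = \{\|\theta - \theta_0\| < \delta\}$, with $f \equiv 0$ on the lateral boundary. Applying the hypothesis at $\theta_0$ with tolerance $\delta' < \delta$ produces radii $r_n \to \infty$ with $f(r_n, \theta_0) > 1 - \delta'$ (since staying in the tighter wedge about $\theta_0$ forces staying in $W$). Applying it at nearby angles $\theta' \in (\theta_0 - \delta/2, \theta_0 + \delta/2)$ with tolerance less than $\delta - |\theta' - \theta_0|$ gives additional points $(r, \theta') \in W$ at arbitrarily large radii where $f$ is close to $1$. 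Together these provide a family of near-maximum points for $f$ that is dense at infinity within the wedge.

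To conclude condition (2), note that $W$ is simply connected and so conformally equivalent to the unit disk $\mathbb{D}$ via some $\phi$; under the resulting time-change, Brownian motion on $W$ becomes Brownian motion on $\mathbb{D}$, and $\tilde f := f \circ \phi^{-1}$ is a bounded harmonic function on $\mathbb{D}$ with boundary values $0$ on the arcs of $\partial\mathbb{D}$ corresponding to the lateral rays. The family of near-maximum points provides sequences (indexed by a dense collection of angles $\theta'$) along which $\tilde f \to 1$ at boundary points of the arc corresponding to $\mS_\infty\cap\overline W$. By a Fatou-type argument, this forces the nontangential boundary values of $\tilde f$ to equal $1$ almost everywhere on that arc, so $\tilde f$ coincides with the Poisson integral of the indicator of the arc. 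Pulling back, $f(r_0,\theta_0)\to 1$ as $r_0\to\infty$, which is exactly the statement we need.

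The main obstacle is this Fatou-type conclusion: one must show that the approach in the cone topology along $\theta=\theta_0$ corresponds under $\phi$ to a nontangential approach in $\mathbb{D}$, and that the sparse sampling of corralling radii provided by the hypothesis really does determine the nontangential limits of $\tilde f$. An alternative that sidesteps the conformal machinery is a direct probabilistic argument via the strong Markov property: exploiting the fact that $f$ is continuous (being harmonic), each of the near-maximum points has a neighborhood on which $f > 1 - 2\delta'$, and one shows that a Brownian motion starting at $(r_0,\theta_0)$ with $r_0$ large enters the union of these neighborhoods (rather than exiting $W$ laterally) with probability arbitrarily close to $1$, using the density of corralling angles and the harmonic measure estimates on the wedge.
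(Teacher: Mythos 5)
Your reduction to condition (2) of Theorem \ref{THM:Basic}, the transience observation, and the introduction of the harmonic function $f(x)=\Prob^x\lp\|\theta_t-\theta_0\|<\delta \text{ for all } t\rp$ on the wedge $W$ with the sampled near-maximum points are all fine, but both of your mechanisms for the crucial step have a genuine gap, and it is the same gap in two guises. In the conformal route, $\tilde f$ is \emph{automatically} the harmonic measure of the set $E\subset\partial\mathbb{D}$ of prime ends of $W$ at infinity (conformal invariance of the exit distribution; no Fatou argument is needed to identify the boundary values), so the real issue is not what the boundary values are but whether the image of the ray $\{\theta=\theta_0\}$ eventually stays inside the region $\{\omega(\cdot,E)>1-\delta\}$. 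Between consecutive sampled radii $r_n^{\theta_0}$ that image curve is completely uncontrolled, and the images of geodesic rays can degenerate badly — the paper's own flat example (Figure \ref{Fig:focusing}) has an entire arc of rays collapsing to a single boundary point — so the sampled points do not pin down $\tilde f$ along the ray; your own flagged worry about nontangential approach is not a technicality but the whole problem restated. Your alternative route has the same hole: the union $N$ of small Harnack neighborhoods of the sampled points is a scattered set, and with no curvature bounds there is no lower bound on the probability that Brownian motion from $(r_0,\theta_0)$ hits $N$ before exiting $W$ laterally. That hitting estimate is precisely what must be proved, and nothing in the proposal supplies it.

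The missing idea — which the paper names when it defers to Lemma 9 and Theorem 7 of \cite{MyDPI} — is that \emph{curves divide surfaces}: the barriers should be the Brownian sample paths launched from the special points, not the points or small balls around them. Take the special points $q_\pm$ at angles $\theta_0\pm 2\delta/3$ given by the hypothesis with tolerance $\delta'\ll\delta$, and run independent Brownian motions $Y^\pm$ from them. With probability close to $1$, the path of $Y^\pm$ stays in a thin wedge about $\theta_0\pm2\delta/3$, never returns below some fixed radius $R_0$, and — by the maximal inequality applied to the bounded martingale $f(Y^\pm_t)$, which starts above $1-\delta'$ — remains in the superlevel set $\{f>1-\sqrt{\delta'}\}$ for all time. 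Each such path, joined to a segment of its ray, is a connected proper curve from $\{r=R_0\}$ to infinity inside a thin wedge, so it separates $\{r>R_0\}\cap W$; hence a Brownian motion from \emph{any} $(r_0,\theta_0)$ with $r_0$ large must, in order to exit $W$ laterally without first dropping to radius $R_0$, hit one of these paths. At that stopping time $f\geq 1-\sqrt{\delta'}$, and the strong Markov property bounds the subsequent lateral-exit probability by $\sqrt{\delta'}$. Summing the error terms gives $f(r_0,\theta_0)>1-\delta$ for all large $r_0$, i.e., condition (2). Without this separation step your argument does not close; with it, the rest of your outline is compatible with the paper's.
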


The proof uses the fact that curves locally divide surfaces to ``corral'' Brownian motion from all points with large $r$ by using Brownian motion from a finite number of the points assumed in the theorem. The logic is quite similar to that used to show that (3) implies (2) in the proof of Theorem \ref{THM:Basic}, and the details can be found in Lemma 9 and the subsequent proof of Theorem 7 in \cite{MyDPI}.

\section{Solvability of the DPI on a surface}\label{Sect:DPI}

Recall that in polar coordinates around a pole $p$, we write the metric on $M$ as 
\[
ds^2 = dr^2 + J^2(r,\theta) \d\theta^2
\]
Then $K(r,\theta)$ determines $J(r,\theta)$ via the Jacobi equation
\[
\partial_r^2 J(r,\theta) + K(r,\theta)J(r,\theta)=0 \quad\text{with } J(0,\theta)=0\text{ and } \partial_r J(0,\theta)=1
\]
along rays (and vice versa via $K(r,\theta)=-\frac{\partial_r^2 J}{J}(r,\theta)$). Note also that $d(\area) = J(r,\theta)\,dr\,d\theta$ and $|\nabla\theta|=\frac{1}{J(r,\theta)}$ in polar coordinates.

\subsection{A criterion for DPI in terms of $J$}\label{Sect:InJ} The proof of our basic criterion for solvability of the DPI is an improvement on the ideas in the proof of Lemma 8 in \cite{MyDPI}.

\begin{proof}[Proof of Theorem \ref{THM:MainDPI}]
First, as noted above, the hypothesis implies that $M$ is transient. Let $\theta_0$, $R$, and $\delta$ be as in Theorem \ref{THM:Corraled}; we wish to find a corresponding $r_0$. Letting $\beta=\delta$ and $\eps=\delta^4/8$, we can find corresponding $\rho$ as in the theorem, and we can also also choose $\alpha$ such that $\alpha>\rho$ and $\alpha>R$. Let $\sigma_{\alpha}$ be the first hitting time of $\{r=\alpha\}$ and $\sigma_\beta$ be the first hitting time of $\{|\theta-\theta_0|=\beta=\delta\}$. Then the first hitting time of $\partial S_{\theta_0}(\alpha,\beta)$ is $\sigma_{\alpha}\wedge\sigma_{\beta}$.  By transience, we can find $r'>\alpha$ large enough so that, for any $r_0\geq r'$,
\begin{equation}\label{Eqn:SigmaAlpha}
\Prob^{(r_0,\theta_0)}\lb \sigma_{\alpha}<\infty\rb < \frac{\delta}{2} .
\end{equation}

Next, given $S=S_{\theta_0}(\alpha,\beta)$ and $r'$ as above, let $r''$, $\mu$, and $S_n$ be the corresponding radius, measure, and exhaustion from Lemma \ref{Lem:Mu}. Let $\sigma_n$ be the first hitting time of $\partial S_n$, and note that $\sigma_n\nearrow \lp \sigma_{\alpha}\wedge\sigma_{\beta}\rp$ as $n\rightarrow \infty$. We want to bound the expectation of $(\theta-\theta_0)^2(B_{\sigma_n})$, for Brownian motion started from $\mu$. Writing $(\theta-\theta_0)^2_t$ for $(\theta-\theta_0)^2(B_{t})$, It\^o's rule shows that
\[
d\lp (\theta-\theta_0)^2 \rp_t = 2(\theta-\theta_0) \left|\nabla \theta\right|\, dW_t +\lp (\theta-\theta_0)\Lap\theta +\left|\nabla \theta\right|^2 \rp\, dt .
\]
Recall that $S_n$ is compact. Then $G^{S_n}_{\mu}$, which gives the occupation density of Brownian motion from $\mu$, stopped at $\sigma_n$, is integrable. Also, $(\theta-\theta_0)_0= 0$, so we have
\[
\E^{\mu}\lb \lp \theta-\theta_0 \rp^2_{ \sigma_n } \rb =
\int_{S_n} (\theta-\theta_0)\Lap\theta G^{S_n}_{\mu} \, d(\text{area}) 
+ \int_{S_n} \left|\nabla \theta\right|^2 G^{S_n}_{\mu} \, d(\text{area}) .
\]
Integration by parts gives
\[\begin{split}
\int_{S_n}  \lp(\theta-\theta_0) G^{S_n}_{\mu} \rp \Lap\theta \,d(\text{area}) & =
\int_{\partial S_n} (\theta-\theta_0) G_{\mu}^{S_n} \ip{\nabla\theta}{\vn} \,d(\text{length}) \\
 -\int_{S_n} G^{S_n}_{\mu} \left| \nabla\theta \right|^2  \,d(\text{area}) 
& - \int_{S_n} (\theta-\theta_0) \ip{\nabla G^{S_n}_{\mu}}{\nabla\theta}  \,d(\text{area}) ,
\end{split}\]
where $\vn$ is the outward unit normal. We know that $G^{S_n}_{\mu}$ vanishes on the boundary of $S_n$, and thus the boundary term is zero. Using this in the above and simplifying, we find
\[
\E^{\mu}\lb \lp \theta-\theta_0 \rp^2_{ \sigma_n } \rb =
- \int_{S_n} (\theta-\theta_0) \ip{\nabla G^{S_n}_{\mu}}{\nabla\theta}  \,d(\text{area}) .
\]
Now the fact that $|\theta-\theta_0|\leq \delta$ on $S_n$ plus the Cauchy-Schwarz inequality imply that
\[
\E^{\mu}\lb \lp \theta-\theta_0 \rp^2_{ \sigma_n } \rb \leq \delta 
\sqrt{\int_{S_n} \lab\nabla\theta\rab^2  \,d(\text{area})}
\sqrt{ \int_{S_n} \lab\nabla G^{S_n}_{\mu}\rab^2  \,d(\text{area})  }.
\]
The $L^2$-norm of $\nabla G^{S_n}_{\mu}$ is less than $\sqrt{2}$ by Lemma \ref{Lem:Mu}. Further, since
\[
\int_{S_n} \lab\nabla\theta\rab^2  \,d(\text{area}) \leq \int_{\theta_0-\beta}^{\theta_0+\beta}  \int_{\alpha}^{\infty} \frac{1}{J^2(r,\theta)} \,d(\text{area}),
\]
we see that the $L^2$-norm of $\nabla\theta$ is less than $\sqrt{\eps}=\delta^2/(2\sqrt{2})$, by hypothesis. Thus,
\[
\E^{\mu}\lb \lp \theta-\theta_0 \rp^2_{ \sigma_n } \rb \leq \frac{\delta^3}{2}.
\]

Note that the event $\{\sigma_{\alpha}\wedge\sigma_{\beta}<\infty\}$ is given by the (not necessarily disjoint) union $\{\sigma_{\beta}<\sigma_{\alpha}\} \cup \{\sigma_{\alpha}<\infty\}$ (and this first event requires that $\sigma_{\beta}<\infty$). By the definition of $\sigma_{\beta}$ and our choice of $\beta$, we see that
\[
\delta^2\cdot \Prob^{\mu}\lp \sigma_{\beta}<\sigma_{\alpha}\rp \leq \E^{\mu}\lb \lp \theta-\theta_0 \rp^2_{\sigma_{\alpha}\wedge\sigma_{\beta}} \Ind{\sigma_{\alpha}\wedge\sigma_{\beta} <\infty} \rb ,
\]
where the expectation is well defined because $\lp \theta-\theta_0 \rp^2_{\sigma_{\alpha}\wedge\sigma_{\beta}}$ is well defined on $\{\sigma_{\alpha}\wedge\sigma_{\beta} <\infty\}$. Moreover, $(\theta-\theta_0)^2_{\sigma_n}$ converges converges to $\lp \theta-\theta_0 \rp^2_{\sigma_{\alpha}\wedge\sigma_{\beta}}$ almost surely on the event $\{\sigma_{\alpha}\wedge\sigma_{\beta} <\infty\}$, so the dominated converge theorem combined with the above shows that
\[
\Prob^{\mu}\lp \sigma_{\beta}<\sigma_{\alpha}\rp \leq \frac{1}{\delta^2} \cdot  \frac{\delta^3}{2} =\frac{\delta}{2} .
\]
Along with \eqref{Eqn:SigmaAlpha} (recall that $\mu$ is supported on $[r',r'']\times\theta_0$), this shows that
\[
\Prob^{\mu}\lp \sigma_{\beta}\wedge \sigma_{\alpha} <\infty \rp > 1-\delta .
\]
In the context of Theorem \ref{THM:Corraled}, note that the event $\{\|\theta_t-\theta_0\|<\delta \text{ for all $t\in[0,\zeta)$}\}$ contains the event  $\{\sigma_{\beta}\wedge \sigma_{\alpha} <\infty \}$. This means that we have established that the hypothesis of Theorem \ref{THM:Corraled} holds, except that our process starts from $\mu$ rather than from some $(r_0,\theta_0)$ with $r_0>R$.

However, the $\mu$-probability of any event is the $\mu$-average of the probability when the process is started from any $(r_0,\theta_0)$ in the support of $\mu$. And since $r'>R$, it follows that there is a point $(r_0,\theta_0)$ in the support of $\mu$ (indeed, ``many'' such points, in some sense) satisfying the hypothesis of Theorem \ref{THM:Corraled}, and thus the DPI on $M$ is solvable.

\end{proof}

\subsection{Flats}\label{Sect:Flats}
While our primary interest is in radial curvature bounds, for which $\int_{\alpha}^{\infty} J(r,\theta_0)\, dr $ can be made arbitrarily small by making $\alpha$ large, uniformly in $\theta_0$, we note that Theorem \ref{THM:MainDPI} naturally accommodates ``flats.'' A flat is a subset of $M$ which is isometric to a strip $[-a,a]\times \bR \subset \bR^2$ for some $a>0$ (with the flat metric). Indeed, if $p$ is in the interior of such a flat, there will be a geodesic through $p$, which, without loss of generality, we take to be given by the union of the rays $\{\theta=\pi/2\}$ and $\{\theta=3\pi/2\}$, such that $J(r,\pi/2)=J(r,3\pi/2)= r$. Then 
\[
\int_{\alpha}^{\infty} \frac{1}{J\lp r,\frac{\pi}{2}\rp} \,dr = \int_{\alpha}^{\infty} \frac{1}{J\lp r,\frac{3\pi}{2}\rp} \,dr =\infty
\]
for any $\alpha>0$, so (in contrast to the proof of Theorem \ref{THM:RadialDPI} below) we won't be able to bound the inner integral (the one with respect to $dr$) in Equation \eqref{Eqn:MainHypo} uniformly in $\theta$. Nonetheless, every ray for $\theta\not\in\{\pi/2, 3\pi/2\}$ exits this flat after a finite distance (and never returns), and thus if the curvature off of the flat is sufficiently negative, we could have that
\[
\int_{\pi/2-\beta}^{\pi/2+\beta} \lp \int_{\alpha}^{\infty} \frac{1}{J(r,\theta)} \,dr\rp \, d\theta <\infty
\]
(essentially because, viewing the inner integral as a function of $\theta$, the singularity at $\pi/2$ is nonetheless integrable), and similarly for a truncated sector around $3\pi/2$. And this allows the possibility that the hypothesis of Theorem \ref{THM:MainDPI} is satisfied in spite of the flat. 

\subsection{Radial curvature bounds}\label{Sect:RadialBounds}
We now turn our attention to radial curvature bounds. If (for some pole $p$) $K$, or equivalently $J$, does not depend on $\theta$, we say that the surface is radially symmetric (with respect to $p$), and we write $K$ and $J$ as functions of $r$ alone. We suppress the dependence on $p$ and denote such a manifold by $M^*$ with associated functions $K^*(r)$ and $J^*(r)$ (such a manifold is determined up to isometry by $K^*(r)$ anyway). Such radially symmetric manifolds serve as natural comparison manifolds. In particular, suppose that for some $M$ with some pole $p$, we have a radially symmetric surface $M^*$ such that $0\geq K^*(r)\geq K(r,\theta)$ for all $r\in[0,\infty)$ and all $\theta\in \bS^1$. Then
\[
r\leq J^*(r)\leq J(r,\theta)\quad\text{for all $r\in[0,\infty)$ and $\theta\in\bS^1$}
\]
and
\[
(\partial_r J^*/J^*)(r) \leq (\partial_r J/J)(r,\theta)\quad\text{for all $r\in[0,\infty)$ and $\theta\in\bS^1$}
\]
by standard comparison theorems. Note that this last inequality is an inequality for the drift terms in the SDE for the radial process, which is one way to show that transience of $M^*$ implies transience of $M$. Also, recall the classical result of Milnor \cite{Milnor} that a radially symmetric Cartan-Hadamard surface $M^*$ is transient if and only if
\[
\int_{\alpha}^{\infty} \frac{1}{J^*(r)} \, dr <\infty \quad \text{for some (hence any) $\alpha>0$.} 
\]
(Note this is just a radially symmetric version of Doyle's later result.)

With this background, the desired radially curvature result is an immediate corollary of Theorem \ref{THM:MainDPI}.

\begin{THM}\label{THM:RadialDPI}
Let $M$ be a Cartan-Hadamard surface, and suppose for some $p\in M$, there is a transient radially symmetric surface $M^*$ with
\[
0\geq K^*(r)\geq K(r,\theta) \quad\text{for all $r\in[0,\infty)$ and $\theta\in\bS^1$}
\]
(this is what was meant by saying $M$ satisfies a ``transient upper curvature bound'' in Theorem \ref{THM:RadialDPIInf}). Then the DPI on $M$ is solvable.
\end{THM}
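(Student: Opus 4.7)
The plan is to verify the hypothesis of Theorem \ref{THM:MainDPI} directly from the curvature comparison, so that the DPI solvability follows as an immediate corollary. The key observation is that the quantity appearing in \eqref{Eqn:MainHypo} depends only on the length of the perpendicular Jacobi field $J(r,\theta)$, which is controlled from below by the comparison surface $M^*$.

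First I would invoke the standard Jacobi field comparison theorem (already recalled in Section \ref{Sect:RadialBounds}): since $0 \geq K^*(r) \geq K(r,\theta)$ for all $r$ and $\theta$, we have $J(r,\theta) \geq J^*(r)$ for all $(r,\theta)$. Consequently, for any $\theta_0$, any $\beta\in(0,\pi)$, and any $\alpha>0$,
\[
\int_{\{r>\alpha,\,|\theta-\theta_0|<\beta\}} \frac{1}{J(r,\theta)}\,dr\,d\theta \;\leq\; 2\beta \int_\alpha^\infty \frac{1}{J^*(r)}\,dr.
\]
This reduces the problem to showing that the right-hand side tends to $0$ as $\alpha\to\infty$, independently of $\theta_0$.

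Second, I would apply Milnor's criterion (or its radially symmetric special case discussed in Section \ref{Sect:RadialBounds}): transience of $M^*$ is equivalent to $\int^\infty 1/J^*(r)\,dr < \infty$. Therefore the tail $\int_\alpha^\infty 1/J^*(r)\,dr$ tends to $0$ as $\alpha\to\infty$. Given $\varepsilon>0$ and $\beta\in(0,\pi)$, we may choose $\rho$ so large that $2\beta \int_\rho^\infty 1/J^*(r)\,dr < \varepsilon$; then for every $\alpha>\rho$, the displayed inequality yields the bound required by \eqref{Eqn:MainHypo}, uniformly in $\theta_0$. Applying Theorem \ref{THM:MainDPI} then completes the proof.

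There is essentially no obstacle here: the content of the theorem is already distributed between the Jacobi comparison estimate and Theorem \ref{THM:MainDPI}, and the proof just glues them together. The one thing worth pausing on is that the bound $J \geq J^*$ holds globally (not just in an asymptotic sense), which is what permits the uniform-in-$\theta_0$ control needed to match the quantifiers in the hypothesis of Theorem \ref{THM:MainDPI}.
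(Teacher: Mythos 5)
Your proposal is correct and matches the paper's own proof essentially line for line: both use the Jacobi comparison $J(r,\theta)\geq J^*(r)$ to bound the double integral in \eqref{Eqn:MainHypo} by $2\beta\int_\alpha^\infty 1/J^*(r)\,dr$, invoke Milnor's criterion to see that this tail vanishes as $\alpha\to\infty$ uniformly in $\theta_0$, and then apply Theorem \ref{THM:MainDPI}. No further comment is needed.
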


\begin{proof}
Using the comparison results just mentioned, plus Milnor's characterization of transience, we have that, for any $\theta\in \mS^1$,
\[
\int_{\alpha}^{\infty} \frac{1}{J(r,\theta)} \, dr <
\int_{\alpha}^{\infty} \frac{1}{J^*(r)} \, dr <\infty \quad \text{for any $\alpha>0$.} 
\]
This second integral can be made arbitrarily small by taking $\alpha$ large enough. Then, referring to Theorem \ref{THM:MainDPI}, we see that
\[
\int_{\theta_0-\beta}^{\theta_0+\beta} \lp \int_{\alpha}^{\infty} \frac{1}{J(r,\theta)} \,dr\rp \, d\theta
\leq 2\beta \int_{\alpha}^{\infty} \frac{1}{J^*(r)} \, dr ,
\]
and, for any $\eps>0$, we can find $\rho$ such that the integral on the right-hand side is less than $\eps/(2\beta)$ whenever $\alpha>\rho$, independent of $\theta_0$. Thus $M$ satisfies the assumptions of Theorem \ref{THM:MainDPI}, and the DPI on $M$ is solvable.

\end{proof}

This is the best possible result in terms of radial curvature bounds, since any weaker radial curvature bound would be one that does not imply transience, and transience is a necessary condition for the DPI to be solvable (indeed, for there to exist any non-constant positive harmonic functions at all). 

\subsection{Radial comparison}\label{Sect:Radial}
Theorem \ref{THM:RadialDPI} (plus Milnor's criterion for transience) allows one to consider any potential radial curvature bound just in terms of ODEs (see Section 2 of \cite{MyIll} for several related examples of comparison geometry, including more detail on the computations that follow). For example, we can recover the result of \cite{MyDPI}, that the DPI is solvable on a Cartan-Hadamard surface $M$ satisfying the radial curvature bound
\begin{equation}\label{Eqn:DPIBound}
K(r,\theta) \leq -\frac{1+\eps}{r^2 \log(r)} \quad\text{for $r>R$}
\end{equation}
for some $\eps>0$ and some $R>1$.

To do so, note that if $M$ satisfies such a bound, then we can consider a (Cartan-Hadamard) radially symmetric comparison manifold $M^*$ such that
\[
K^*(r) = -\frac{1+\frac{\eps}{2}}{r^2\log r}\lp 1+\frac{\eps}{\log r}\rp \quad \text{for $r\geq A$,}
\]
for some $A>R$. The point is that for such $K^*$, we can solve the Jacobi equation more or less explicitly. In particular, we find (by first finding one solution $J_1$ and then using reduction of order to find the other independent solution) that a basis for the solution to the Jacobi equation for $K^*$ on the interval $r\in[A,\infty)$ is given by
\[
J_1(r) = r\lp \log r\rp^{1+\frac{\eps}{2}} \quad\text{and}\quad
J_2(r) = J_1(r)  \int^r \frac{1}{s^2\lp\log s \rp^{2+\eps}} \, ds . 
\]
The exact linear combination that gives $J^*(r)$ on $r\in[A,\infty)$ depends on the initial conditions at $R=A$, which in turn depend on $K^*$ and thus $K$ for $r\in[0,A]$, and which is not given. Still, we know that $J^*(r)\geq r$ for all $r>0$ by comparison with (flat) $\bR^2$, and because $ \int^r 1/(s^2\lp\log s \rp^{2+2\eps}) \, ds$ is increasing and bounded, we see that there is some constant $c>0$ such that $J^*(r) \sim c J_1(r)$ as $r\rightarrow \infty$, where ``$\sim$'' means that the ratio of the two sides approaches 1. Since $1/J_1$ can be explicitly integrated, we see from Milnor's criterion that $M^*$ is transient, and thus the DPI on $M$ is solvable, as desired.

While the curvature bound of Inequality \eqref{Eqn:DPIBound} is known to be sharp for bounds of the form $\frac{c}{r^2\log(r)}$ (where $c$ is some positive constant), Theorem \ref{THM:RadialDPI} opens up the possibility of considering more baroque functional forms (iterated logarithms, for example), by following a similar ODE-based analysis of a comparison manifold.

\section{DPI and the Martin boundary}\label{Sect:Relating}

We have already noted that, for a Cartan-Hadamard surface $M$, the geometric circle at infinity $\SI(M)$ can be given an explicit coordinate $\hat{\theta}\in[0,2\pi)$, understood in the usual way for $\bS^1$. Indeed, for any $p\in M$, polar coordinates based at $p$ determine such a coordinate uniquely up to rotation. (Assuming that we parametrize $\SI(M)$ counter-clockwise, but since any Cartan-Hadamard surface is orientable, there is no problem with assuming we have an orientation and choosing polar coordinates in a compatible way.) For the rest of this section, we assume $M$ is a transient Cartan-Hadamard surface. Then, in a similar way, we can put a coordinate on the Martin boundary. Namely, for any $p\in M$, there is a conformal diffeomorphism from $M$ to the open unit disk $D$ that takes $p$ to the origin, and this map is unique up to rotation of $D$ (see Section \ref{Sect:MartinId} for a construction of this map using the Green's function). If $\rho$ and $\varphi$ are polar coordinates on $D$, then they induce global coordinates on $M$, and the Martin boundary $\partial M$ is identified with the boundary circle $\{\rho=1\}$. Thus, $\varphi$ induces a coordinate on the Martin boundary, which we denote $\hat{\varphi}$. So for any $p\in M$, we have a coordinate $\hat{\theta}$ on $\SI(M)$ and a coordinate $\hat{\varphi}$ on $\partial M\simeq \bS^1$, each uniquely determined up to rotation. (Note that choosing a different pole $p$ changes $\varphi$ by a homeomorphism, which is easily verified since the conformal automorphisms of the disk are well known. So all of our considerations will be seen to be independent of the choice of pole, as they must be.)

In light of the above, it is natural to consider the geodesic ray $\gamma_{\theta}$ started from $p$ with initial direction $\theta$ (that is, $\gamma_{\theta}$ is the ray $[0,\infty)\times\theta$ in polar coordinates), for each $\theta\in \mS^1$. Moreover, we identify $\gamma_{\theta}$ with its image in the unit disk under the above conformal diffeomorphism, and this is the framework in which Figure \ref{Fig:Martin} is to be understood. Note that the set of such $\gamma_{\theta}$ is canonically identified with $\SI(M)$.

For an arbitrary transient Cartan-Hadamard surface, there is no reason that there should be any particular relationship between the geometric and Martin boundaries. Presumably, one could have a situation like Figure \ref{Fig:squiggle} in which an arc of geodesics accumulates at an arc of the Martin boundary, and thus neither boundary maps into the other. However, the author is not aware of a construction of such a metric. Indeed, it appears that the only ``trivial'' constructions of (transient) Cartan-Hadamard surfaces where the geometric and Martin boundaries can be see not to be naturally homeomorphic come from having flat regions and for which the geometric boundary naturally surjects onto the Martin boundary. For example, let $M$ be a Cartan-Hadamard manifold such that, in polar coordinates, the metric satisfies $K(r,\theta)=0$ on $U=\{(r,\theta) : r\geq1, \pi/3\leq\theta\leq 2\pi/3\}$ and $K(r,\theta)=-1$ on $V=\{(r,\theta) : r\geq1, 4\pi/3\leq\theta\leq 5\pi/3\}$ (so that $U$ is isometric to a truncated sector of Euclidean space and $V$ is isometric to a truncated sector of hyperbolic space). Then $M$ is transient, which follows from the fact that Brownian motion in $V$ has a positive probability of going to infinity without leaving $V$, and indeed, $\hat{\theta}\in (4\pi/3, 5\pi/3)$ will give an arc in the Martin boundary. However, Brownian motion started in $U$ almost surely cannot go to infinity without leaving $U$, so the arc $\hat{\theta}\in (\pi/3, 2\pi/3)$ collapses to a single point in the Martin boundary, as illustrated in Figure \ref{Fig:focusing}. (This also underscores the point that Theorem \ref{THM:RadialDPI} does not say that the DPI is solvable on any transient Cartan-Hadamard surface.) In this case, the map that sends $\hat{\theta}$ to $\hat{\varphi}$ by sending $\gamma_{\hat{\theta}}$ to its limit point in $\partial D$ gives the surjection of $\SI(M)$ onto $\partial M$.

\begin{figure}
  \begin{subfigure}[b]{0.4\textwidth}
    \includegraphics[width=\textwidth]{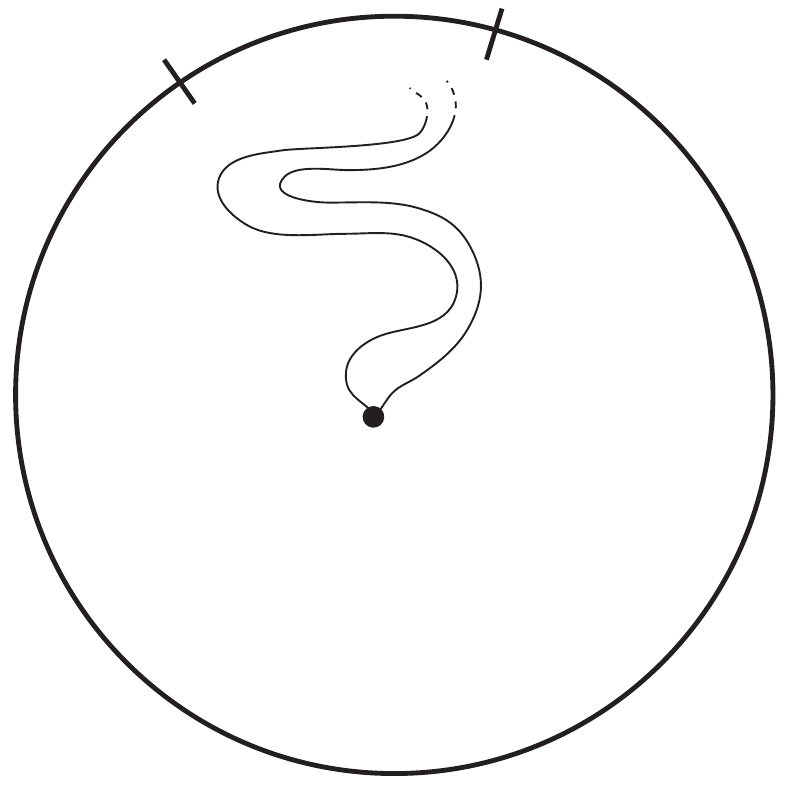}
   \caption{An arc's worth of geodesics accumulate at an arc of the Martin boundary.}
   \label{Fig:squiggle}
  \end{subfigure}
\qquad
\begin{subfigure}[b]{0.4\textwidth}
    \includegraphics[width=\textwidth]{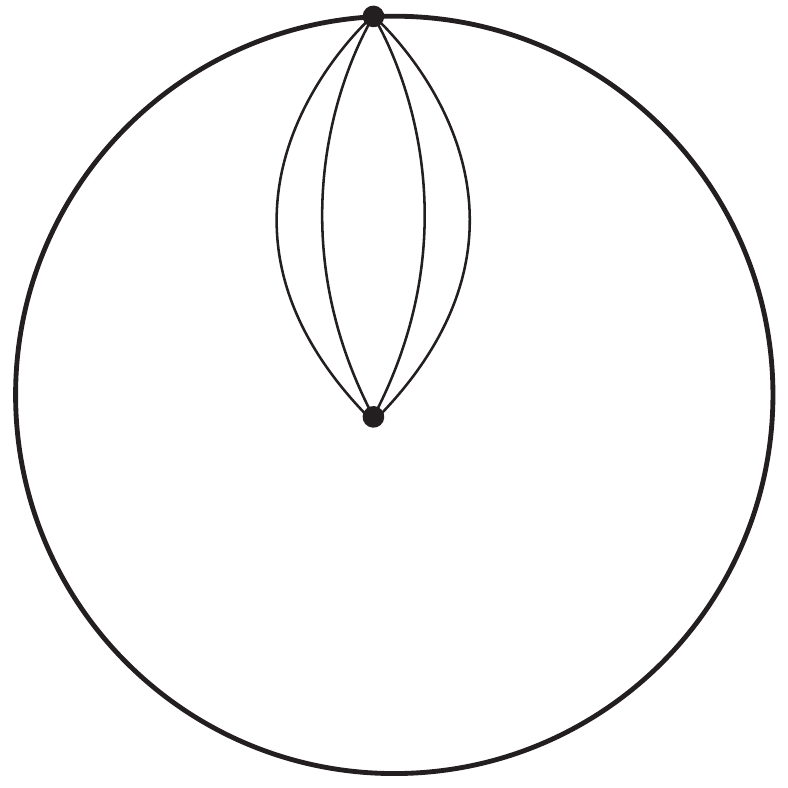}
    \caption{An arc's worth of geodesics that all converge to the same point of the Martin boundary.}
    \label{Fig:focusing}
  \end{subfigure}
 
 \bigskip
  \begin{subfigure}[b]{0.4\textwidth}
    \includegraphics[width=\textwidth]{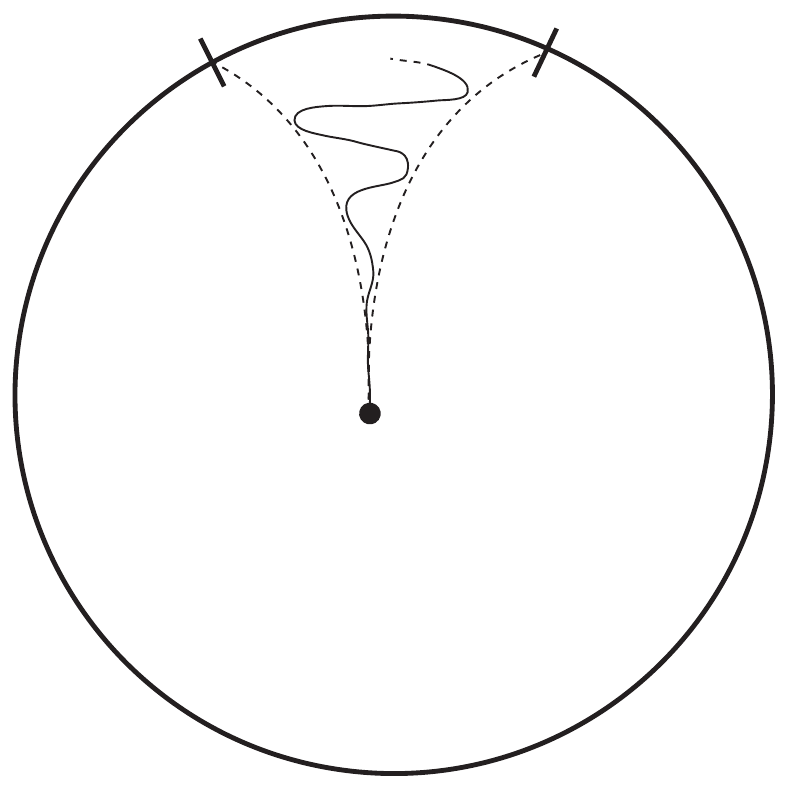}
    \caption{A single geodesic accumulates at an arc of the Marin boundary, and it is the only geodesic accumulating anywhere on this arc.}
    \label{Fig:winding}
  \end{subfigure}  
 \caption{Three (potentially) possible ways in which the geometric boundary at infinity and the Martin boundary can fail to be homeomorphic. If the the DPI is solvable, only \ref{Fig:winding} is possible.}
 \label{Fig:Martin}
\end{figure}

However, as stated in Theorem \ref{THM:Homeo}, if the DPI is solvable on a Cartan-Hadamard surface, then there is a surjection of the Martin boundary onto the sphere at infinity, which we now prove.

\begin{proof}[Proof of Theorem \ref{THM:Homeo}]
It's clear that $M$ must be transient, so consider the Martin compactification, explicitly realized as the closed unit disk under uniformization, as above, with angular coordinate $\varphi$. The central point is that for each point of the Martin boundary, meaning for each $\hat{\varphi}\in\partial D$, there is a unique $\theta$ such that the geodesic ray $\gamma_{\theta}$ accumulates at $\hat{\varphi}$. To see this, note that, if not, we can find $\varphi_0$, $\theta_0$, and $\delta>0$ such that every ray $\gamma_{\theta}$ in the arc $(\theta_0-\delta,\theta_0+\delta)$ has $\varphi_0$ as an accumulation point. But Brownian motion on $M$ is given by a (non-degenerate) time-change of Brownian motion on $D$ (under the conformal diffeomorphism relating the two), and Brownian motion on $D$, from any point, has a hitting measure on $\partial D$ that is absolutely continuous with respect to Lebesgue measure on $\mS^1$. Since both $\gamma_{\theta}$ and the paths of Brownian motion are continuous curves, it follows that, for any $r_0>0$,
\[
\Prob^{(r_0,\theta_0)}\lp \limsup_{t\rightarrow \zeta} \|\theta_t-\theta_0\|\geq \delta \rp =1 .
\]
But this contradicts Theorem \ref{THM:Basic} (indeed, this is the only part of the paper where we directly use Theorem \ref{THM:Basic}). So we conclude that for each $\hat{\varphi}\in\partial D$, there is a unique $\theta$ such that the geodesic ray $\gamma_{\theta}$ accumulates at $\hat{\varphi}$. Then let $F:\partial M\rightarrow \SI(M)$ be the map given in coordinates by taking $\hat{\varphi}\in\partial M$ to the unique $\hat{\theta}$ such that $\gamma_{\hat{\theta}}$ accumulates at $\hat{\varphi}$.

By compactness, every ray $\gamma_{\theta}$ has at least one accumulation point on $\partial D$, and thus $F$ must be surjective. Suppose that $F$ is not injective. Then if $F(\hat{\varphi}_0) =F(\hat{\varphi}_1)$ for $\hat{\varphi}_0\neq \hat{\varphi}_1$, because the rays $\gamma_{\theta}$ cannot cross outside of the origin, $F$ must be constant  on one of the two arcs between $\hat{\varphi}_0$ and $\hat{\varphi}_1$. It follows that there are at most countable many such arcs (on which $F$ is constant), and so, after rotation in $\hat{\theta}$, we can assume that $F(0)=0$ and that $0$ is the unique pre-image of $0$.

Hence we can write $F$ in coordinates as a function from $\hat{\varphi}\in [0,2\pi]$ to $\hat{\theta}\in[0,2\pi]$ such that $F(0)=0$ and $F(2\pi)=2\pi$. Moreover, because the rays $\gamma_{\theta}$ cannot cross outside of the origin, $F$ is monotone non-decreasing. Then because $F$ is a surjection, it is necessarily continuous, and $F$ is a homeomorphism if and only if $F$ is injective, which is equivalent to $F$ being strictly increasing. And $F$ only fails to be strictly increasing if there is a non-trivial interval on which it is constant, but in light of the above, this is exactly the case when there is a geodesic ray that accumulates at more than one point of $\partial D$ (which we identify with $\partial M$), in which case it must accumulate at an entire arc. And if a geodesic ray accumulates at a non-trivial arc, it must be the only geodesic that accumulates at any point of that arc (see Figure \ref{Fig:winding}). Finally, since the hitting measure of Brownian motion from any point in (the interior of) $D$ has a smooth density, bounded from below, on $\partial D$, we see that when this hitting measure is pushed to $\SI(M)$ by $F$, there is an atom if and only if the corresponding geodesic accumulates at a (non-trivial) arc.

\end{proof}

To complete the discussion of the possible relationships between the geometric boundary at infinity and the Martin boundary, note that the case when $\SI(M)$ and $\partial M$ are naturally homeomorphic is most clearly illustrated by the hyperbolic plane. Indeed, in this case, the conformal diffeomorphism of $M$ onto $D$ just gives the Poincare disk model of hyperbolic geometry and the rays $\gamma_{\theta}$ are exactly the Euclidean rays from the origin, so that the homeomorphism from $\SI(M)$ to $\partial M$ is just the identity map on $\mS^1$, after possible rotation (and it seems unnecessary to draw the corresponding picture). More generally, when $\SI(M)$ and $\partial M$ are naturally homeomorphic, the homeomorphism is given by the map $F$ above that takes $\hat{\theta}$ to the unique $\hat{\varphi}$ such that $\gamma_{\hat{\theta}}$ accumulates at $\hat{\varphi}$.

\section{Identification of the Martin boundary with $\SI(M)$}\label{Sect:MartinId}

Again assume $M$ is a transient Cartan-Hadamard surface, and let $G^p(r,\theta)$ be the Green's function associated to a point mass at $p$, that is, the occupation density of Brownian motion started from $p$, where $p$ is the pole for our polar coordinates. Then it is well known that we can can give a conformal map from $M$ to the unit disk $D$ using $G^p$. In particular, $2\pi G^p$ is harmonic on $M\setminus\{p\}$ and has distributional Laplacian $-2\pi\delta_{p}$. Then if $F^p$ is the harmonic conjugate of $2\pi G^p$ on $m\setminus\{p\}$, $F^p$ is only defined modulo $2\pi$ and up to an additive constant. Nonetheless, if we fix the additive constant, so that $F^p$ is a multi-valued function, then $g=e^{-\lp 2\pi G^p + iF^p\rp}$ is a well-defined (that is, single-valued) conformal bijection from $M$ to $D$, here viewed as a subset of $\mC$. Moreover, $g$ maps $p$ to the origin and we have specified $g$ up to the additive constant in $F^p$, which corresponds to specifying it up to a rotation of the disk. (Indeed, one can compare this construction to the explicit Green's function on the disk to see the motivation.)

From the above, we see that the conformal factor relating the metric in $M$ to the Euclidean metric on $D$ is $2\pi\|\nabla G^p\| |g|$. As above, let $\gamma_{\theta_0}$ be the image in $D$ of the geodesic ray from $p$ corresponding to $\{\theta=\theta_0\}$ in polar coordinates. Then the Euclidean length of $\gamma_{\theta_0}$ in $D$ is given by integrating this conformal factor along the geodesic ray. In particular, since $\gamma_{\theta}$ is a proper curve (so that the length of the portion in any neighborhood of the origin is finite) and the conformal factor is comparable to $\|\nabla G^p\|$ outside of any neighborhood of the origin, $\gamma_{\theta_0}$ will have finite Euclidean length if and only if 
\[
\int^{\infty} \|\nabla G^p(r,\theta_0)\| \, dr <\infty .
\]
Further, in light of Section \ref{Sect:Relating}, we see that $\gamma_{\theta_0}$ having finite Euclidean length implies that the hitting measure on $\SI(M)$ cannot charge $\theta_0$. Thus, to prove that $\partial M$ and $\SI(M)$ are naturally homeomorphic, it is sufficient to show that $\gamma_{\theta_0}$ has finite Euclidean length for every $\theta_0$, and thus it is sufficient to show that $\|\nabla G^p(r,\theta_0)\|$ is integrable near $r=\infty$ for every (fixed) $\theta_0$.

This will be our approach to proving Theorem \ref{THM:MartinId}. To carry this out, we will make use of the Cheng-Yau inequality (which goes back to \cite{ChengYau}, and we also refer to \cite{ThalADriver} for a version proven by stochastic methods and also proving an explicit value for the dimensional constant, though we won't use that explicit value). The Cheng-Yau inequality compares the gradient of a positive harmonic function to the value of the function at every point of a relatively compact domain. For any $r_0>3$, we consider the annulus $A_{r_0}=\lc r_0-1<r<r_0+1, \theta\in \mS^1\rc$, and then we let $k(r_0)=\sup_{A_{r_0}} \sqrt{-K(r,\theta)}$. Then since $G^p$ is positive and harmonic away from $p$, for any $r>3$, the Cheng-Yau inequality implies the estimate
\begin{equation}\label{Eqn:CY}
\| \nabla G^p(r,\theta)\| \leq c \lp k(r) +1\rp G^p(r,\theta) ,
\end{equation}
where $c$ is a dimensional constant.

Thus, given a radial lower curvature bound, we will be able to estimate $\| \nabla G^p\|$ in terms of $G^p$. Then to estimate $G^p$, we will use a radial upper curvature bound in the following way. First, because $G^p$ is a positive, bounded harmonic function on $\{r>2\}$ that goes to 0 as $r\rightarrow\infty$, $G^p$ on $\{r>2\}$ is determined by its boundary values on $\{r=2\}$, and these boundary values are continuous and bounded above and below by some positive constants. More precisely, $G^p(r_0,\theta_0)$ for $r>2$ is given as the integral of these boundary values with respect to the hitting measure on $\{r=2\}$ of Brownian motion started at $(r_0,\theta_0)$, where this hitting measure will have total mass strictly less than 1 by the transience of $M$ (if we think about the unit disk under uniformization, we're just solving the Dirichlet problem on a topological annulus where the outer boundary has 0 boundary value). Now if $M^*$ is a (transient) radially symmetric upper comparison manifold as in Section \ref{Sect:RadialBounds} with Green's function at its pole written as $G^*(r)$, we have an analogous representation of $G^*(r)$ as the mass of the hitting measure from $r$ times $G^*(2)$ (taking advantage of the radial symmetry to reduce everything to 1 dimension). Radial comparison implies that the probability that Brownian motion on $M$ from $(r_0,\theta_0)$ hits $\{r=2\}$ is less than or equal to the probability that Brownian motion on $M^*$ from $(r_0,\cdot)$ hits $\{r=2\}$. Putting this together, we see that, for any $r_0>2$ and any $\theta_0$, $G^p(r_0,\theta_0)$ is less or equal to a constant times $G^*(r_0)$, where this constant depends only on the ratio of $\max_{\theta} G^p(2,\theta)$ and $G^*(2)$, and thus not on $r_0$ or $\theta_0$. Further, on a radially symmetric manifold, $G^*(r)$ can be readily expressed in terms of $J^*(r)$. Namely, one checks that, up to a normalizing constant depending only on dimension, $G^*(r)$ is given by $\int_r^{\infty} \frac{1}{J^*(s)}\, ds$. (Note that in 2 dimensions, the finiteness of this integral is exactly the well-known condition for $M^*$ to be transient.) Thus, using standard comparison geometry as in Section \ref{Sect:Radial}, upper curvature bounds will translate into upper bounds on $G^p$ for large enough $r$.

\begin{proof}[Proof of Theorem \ref{THM:MartinId}]
First note that, in both cases, the DPI on $M$ is solvable by Theorem \ref{THM:RadialDPI} (in particular, $M$ is transient, and also Theorem \ref{THM:Homeo} applies). Referring to the discussion above, for the first set of conditions, we have that $k$ is constant in the Inequality \eqref{Eqn:CY}, so $|\nabla G^p|$ will be bounded by a constant times $G^p$ for large $r$, and thus it is sufficient to find a comparison manifold such that $G^*(r)$ is integrable near infinity. The argument is analogous to that of Section \ref{Sect:Radial}. In particular, let $\delta>0$ be such that $2+\eps=(2+\delta)(1+\delta)$. Then we can find a (radially symmetric Cartan-Hadamard) comparison manifold $M^*$ such that
\[
K^*(r) = -\frac{(2+\delta)(1+\delta)}{r^2} \quad \text{for $r>R$} ,
\]
so that 
\[
J_1(r)= r^{2+\delta}  \quad\text{and}\quad J_2(r)= J_1(r)\int^r \frac{1}{s^{4+2\delta}}\, ds
\]
are a basis for the solution space of the Jacobi equation on $r\in(R,\infty)$. Then there is some $c>0$ such that
\[
J^*(r) \sim c r^{2+\delta} \quad\text{as $r\rightarrow\infty$.}
\]
From here, it's straightforward to see that
\[
\int^{\infty} \lp \int_{r}^{\infty} \frac{1}{J^*(\rho)}\, d\rho\rp \, dr <\infty ,
\]
which is equivalent to showing that $G^*(r)$ is integrable near infinity.

For the second set of conditions, again referring to the discussion above, we have $k(r)=\sqrt{c e^{\lambda(r+1)}}$ in Inequality \eqref{Eqn:CY}. Thus, after adjusting $c$, we have that $|\nabla G^p| \leq c e^{\lambda r/2} G^*(r)$ for all sufficiently large $r$, independent of $\theta$. On the other hand, the upper curvature bound $K\leq -1$ means that we can take the hyperbolic plane as our comparison manifold. Here the Green's function is well known, but for our purposes, we note that, up to a constant, it is given by
\[
\int_r^{\infty} \frac{1}{\sinh \rho} \,d\rho = -\log\lp \tanh(r/2)\rp .
\]
Thus it's enough to show that
\[
\int^{\infty} - e^{\lambda r/2} \log\lp \tanh(r/2)\rp \, dr <\infty .
\]
But, restricting our attention to positive $\lambda$, we see that this is true exactly when $\lambda <2$, say, by using that $\log\lp \tanh x\rp \sim -2/(1+e^{2x})$ for large $x$.
\end{proof}

Presumably other matched pairs of upper and lower curvature bounds can be produced by the same approach, again as in Hsu's work.

Of course, in the above, we are not necessarily giving sharp results. In particular, it is not necessary for $\gamma_{\theta_0}$ to have finite Euclidean length in order for it to converge at the boundary of the disk. One just needs that the oscillations in $\varphi$ cancel sufficiently for $\varphi$ to converge along $\gamma_{\theta_0}$. So one might hope that better curvature bounds are possible. Moreover, in light of the results for the DPI, one might naturally wonder if lower curvature bounds are necessary for the identification of the geometric boundary at infinity with the Martin boundary. While we don't resolve that here, it's not clear that it is a reasonable hope. As Section \ref{Sect:Relating} illustrates, the natural inclusion of the geometric boundary at infinity into the Martin boundary and the natural inclusion of the Martin boundary into the geometric boundary at infinity are in some sense ``opposite phenomenon.'' In one case, one wants to prevent more than one geodesic from accumulating at a given point of the Martin boundary, and in the other one wants to prevent a geodesic from accumulating at multiple Martin boundary points.

Finally, we note that one can give a more stochastic argument for the results of Theorem \ref{THM:MartinId} that, in the end, amounts to the same thing. Namely, for any given $\theta_0$, let $f_{\lambda}(\theta)$ be the continuous function on $\mS^1$ that is 0 on the complement of $(\theta_0-\lambda,\theta_0+\lambda)$, 1 when $\theta=\theta_0$, and linear on $[\theta_0-\lambda,\theta_0]$ and also on $[\theta_0,\theta_0+\lambda]$, for any $\lambda\in(0,1]$ (that is, $f_{\lambda}$ is a triangle function that gets narrower as $\lambda$ decreases). Then we can view $f_{\lambda}$ as a function on $\lp M\cup \SI(M)\rp\setminus \{p\}$ in the natural way using the polar coordinates. Letting $\mu$ be a conveniently chosen initial probability measure with a smooth, compactly supported density, we see that $\lim_{\lambda\searrow 0} \E^{\mu}\lb f_{\lambda}\lp B_{\zeta}\rp\rb$ gives the mass at $\theta_0\in\SI(M)$ of the hitting measure corresponding to $\mu$, and so we want conditions under which this limit is 0. Starting from the SDE satisfied by $f_{\lambda}$ and using the $G^{\mu}$ as the occupation density of Brownian motion, integration by parts, and standard approximations and exhaustions, similar to the what was done in Section \ref{Sect:InJ}, we see that we want to show that, for large $R$,
\[
\frac{1}{\lambda} \int_{R}^{\infty}\lb \int_{-\lambda}^{0} \frac{1}{J}\frac{\partial G^{\mu}}{\partial\theta} \,d\theta -
\int_0^{\lambda} \frac{1}{J}\frac{\partial G^{\mu}}{\partial\theta} \,d\theta\rb\,dr 
\]
goes to 0 with $\lambda$ (assuming we can also justify exchanging limits with integration). Then the estimates on $\|\nabla G^p\|$ just given can also be used to show that this limit is 0 under the given curvature conditions, and they also justify the various limits. (Note that $(1/J) \partial G/\partial\theta$ is the $\theta$-component of $\nabla G$.) The previous argument in terms of the Euclidean length of $\gamma_{\theta_0}$ seems technically simpler and more intuitive, which is why we prefer it. It is also the case that we see the same possibilities for improvement, namely, it would be enough to control the $\theta$-component of the gradient, which corresponds to controlling the $\varphi$ oscillation along $\gamma_{\theta_0}$, and one could take advantage of the cancellation of these two $d\theta$ integrals, which corresponds to allowing $\gamma_{\theta_0}$ to have infinite oscillation in $\varphi$ but nonetheless having it converge.

\def\cprime{$'$}
\providecommand{\bysame}{\leavevmode\hbox to3em{\hrulefill}\thinspace}
\providecommand{\MR}{\relax\ifhmode\unskip\space\fi MR }
% \MRhref is called by the amsart/book/proc definition of \MR.
\providecommand{\MRhref}[2]{%
  \href{http://www.ams.org/mathscinet-getitem?mr=#1}{#2}
}
\providecommand{\href}[2]{#2}

\end{document}